\newtheorem{thm}{Theorem}[section]
\newtheorem{lem}[thm]{Lemma}
\newtheorem{prop}[thm]{Proposition}
\theoremstyle{definition}
\newtheorem{defin}[thm]{Definition}
\newtheorem{rem}[thm]{Remark}
\newtheorem{exa}[thm]{Example}
\numberwithin{equation}{section}
\author[Ayako Itaba]{Ayako Itaba}
\address{
Institute of Arts and Sciences, 
Tokyo University of Science\\
6-3-1 Niijuku, Katsushika-ku, Tokyo, 125-8585, Japan}
\email{itaba@rs.tus.ac.jp}
\keywords{Quantum polynomial algebras, 
          Geometric algebras, 
          Quantum projective planes, 
          Calabi-Yau algebras. }
\subjclass[2020]{16W50, 16S37, 16D90, 16E65.}
\newcommand{\Ext}{{\rm Ext}}
\newcommand{\al}{\alpha}
\newcommand{\be}{\beta}
\newcommand{\si}{\sigma}
\newcommand{\la}{\lambda}
\def\Spec{\mathsf{Spec}}
\def\Proj{\mathsf{Proj}}
\def\Specn{\mathsf{Spec}_{{\rm nc}}}
\def\Projn{\mathsf{Proj}_{{\rm nc}}}
\def\s{\sigma}
\def\cA{\mathcal A}
\def\cO{\mathcal O}
\def\NN{\mathbb N}
\def\PP{\mathbb P}
\def\mod{\mathsf{mod}}
\def\grmod{\mathsf{grmod}}
\def\tors{\mathsf{tors}}
\def\tails{\mathsf{tails}}
\def\Aut{{\rm Aut}}
\def\<{\langle}
\def\>{\rangle}
\def\Spec{\mathsf{Spec}}
\def\Proj{\mathsf{Proj}}
\def\Specn{\mathsf{Spec}_{{\rm nc}}}
\def\Projn{\mathsf{Proj}_{{\rm nc}}}
\def\Tails{\mathsf{Tails}}
\def\Mod{\mathsf{Mod}}
\def\s{\sigma}
\def\cA{\mathcal A}
\def\cO{\mathcal O}
\def\NN{\mathbb N}
\def\PP{\mathbb P}
\def\mod{\mathsf{mod}}
\def\grmod{\mathsf{grmod}}
\def\tors{\mathsf{tors}}
\def\tails{\mathsf{tails}}
\def\Aut{{\rm Aut}}
\def\<{\langle}
\def\>{\rangle}
\begin{document}

\title
[Quant. proj. planes and Beilinson alg. of $3$-dim. quant. poly. alg. for Type S']
{Quantum projective planes and Beilinson algebras of $3$-dimensional quantum polynomial algebras for Type S'} 
\begin{abstract}
Let $A=\mathcal{A}(E,\sigma)$ be a $3$-dimensional quantum polynomial algebra
where $E$ is $\mathbb{P}^{2}$ or a cubic divisor in $\mathbb{P}^{2}$, and $\sigma\in \mathrm{Aut}_{k}E$. 
Artin-Tate-Van den Bergh proved that $A$ is finite over its center if and only if 
the order $|\sigma|$ of $\sigma$ is finite. 
As a categorical analogy of their result, the author and Mori showed that 
the following conditions are equivalent; 
(1) $|\nu^{\ast}\sigma^{3}|<\infty$, where $\nu$ is the Nakayama automorphism of $A$. 
(2) The norm $\|\sigma\|$ of $\sigma$ is finite. 
(3) The quantum projective plane $\mathsf{Proj}_{{\rm nc}}A$ is finite over its center. 
In this paper, we will prove for Type S' algebra $A$ that the following conditions are equivalent; 
(1) $\mathsf{Proj}_{{\rm nc}}A$ is finite over its center. 
(2) The Beilinson algebra $\nabla A$ of $A$ is $2$-representation tame. 
(3) The isomorphism classes of simple $2$-regular modules over $\nabla A$ 
are parametrized by $\mathbb{P}^{2}$. 
\end{abstract}
\maketitle

\section{Introduction}
Throughout this paper, we fix an algebraically closed field $k$ of characteristic $0$. 
All algebras and (noncommutative) schemes are defined over $k$. 
We further assume that all (graded) algebras are finitely generated (in degree $1$) over $k$, 
that is, algebras of the form $k\<x_1, \dots, x_n\>/I$ for some (homogeneous) ideal $I\lhd k\<x_1, \dots, x_n\>$ (where $\deg x_i=1$ for every $i=1, \dots, n$). 

In noncommutative algebraic geometry, a quantum polynomial algebra is a basic and important research object, 
which is a noncommutative analogue of a commutative polynomial algebra. 
A quantum projective space is the noncommutative projective scheme associated 
to a quantum polynomial algebra, 
a quantum projective space is also a basic and important research object in this research area. 
At the beginning of noncommutative algebraic geometry, 
Artin-Tate-Van den Bergh \cite{ATV1} found a nice correspondence 
between $3$-dimensional quantum polynomial algebras and geometric pair $(E,\si)$, 
where $E$ is $\mathbb{P}^{2}$ or a cubic divisor in $\mathbb{P}^{2}$, and $\sigma\in \mathrm{Aut}_{k}E$. 
So, this result allows us to write a $3$-dimensional quantum polynomial algebra $A$ as the form $A=\mathcal{A}(E,\sigma)$. 

For a $3$-dimensional quantum polynomial algebra $A=\cA(E, \s)$, 
Artin-Tate-Van den Bergh \cite{ATV2} gave the following geometric characterization 
when $A$ is finite over its center.
\begin{thm}[{\cite[Theorem 7.1]{ATV2}}]
\label{atv2}
Let $A=\mathcal{A}(E,\sigma)$ be 
a $3$-dimensional quantum polynomial algebra. 
Then the following are equivalent:  
\begin{enumerate}[{\rm (1)}]
\item The order $|\sigma|$ of $\sigma$ is finite. 
\item $A$ is finite over its center. 
\end{enumerate}
\end{thm}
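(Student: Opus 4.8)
The plan is to pass through the twisted homogeneous coordinate ring and to treat ``finite over its center'' via the Polynomial Identity (PI) condition. First I would recall from the ATV correspondence \cite{ATV1} that $A=\cA(E,\s)$ is a Noetherian, Auslander-regular, Cohen-Macaulay domain of Gelfand-Kirillov dimension $3$; in particular it is a maximal order, and for algebras of this type being module-finite over the center is equivalent to being a PI algebra, so it suffices to prove that $A$ is PI if and only if $|\s|<\infty$. When $E=\bbP$ the algebra is (a twist of) a quantum affine space and the statement reduces to the classical fact that such a skew polynomial ring is PI precisely when the defining parameters are roots of unity, i.e.\ when $\s$ has finite order; so I would concentrate on the case where $E$ is a cubic divisor. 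There one has a normal regular element $g\in A_3$ with $A/gA\cong B:=B(E,\s,\mathcal{L})$, the twisted homogeneous coordinate ring for $\mathcal{L}=\cO_E(1)$, and the $g$-adic filtration (by two-sided ideals, using normality of $g$) identifies the associated graded of $A$ with a ring built from $B$ and a central variable. Hence $A$ is PI if and only if $B$ is PI, and the problem is transported to the geometrically transparent ring $B$.

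For the implication $(1)\Rightarrow(2)$ I would argue as follows. Assume $|\s|=n<\infty$. In $B$ the product of sections $s\in B_p=H^0(E,\mathcal{L}_p)$ and $t\in B_q$ is $s\otimes(\s^{p})^{\ast}t$, so the twisting is governed by powers of $\s$; on the $n$-th Veronese $B^{(n)}=\bigoplus_{k\ge 0}H^0(E,\mathcal{L}_{nk})$ the twist is by $\s^{n}=\id$ and is therefore trivial, making $B^{(n)}$ the ordinary (commutative) homogeneous coordinate ring of $(E,\mathcal{L}_n)$. Since $B$ is generated in degree $1$, it is a finite module over the commutative ring $B^{(n)}$ and is thus PI; transporting back through $g$ gives that $A$ is PI, hence finite over its center.

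For $(2)\Rightarrow(1)$ I would prove the contrapositive, and here lies the main obstacle. Assuming $|\s|=\infty$, I must show that $B$ is not PI, and it is enough to show that $B$ is not finite over its center, i.e.\ that $\operatorname{GKdim}Z(B)<\operatorname{GKdim}B=2$. The multiplication rule forces any homogeneous central element of degree $m$ to come from a section of $\mathcal{L}_m$ that is invariant under the induced $\s$-action, and the heart of the argument is to show that when $\s$ has infinite order these invariants are too sparse to build a center of full dimension --- equivalently, that an infinite $\s$-orbit on $E$ obstructs every polynomial identity. I expect this to be the genuinely hard and geometric step: one must rule out all identities at once, which I would attempt either by exhibiting fat-point or line modules of unbounded dimension (violating the Kaplansky bound on simple modules over a PI ring) or by directly estimating the Hilbert series of the graded centralizer and showing that its growth rate drops strictly below that of $B$. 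This is precisely the point at which the arithmetic of the order of $\s$, rather than mere noncommutativity, becomes decisive.
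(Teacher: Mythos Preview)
The paper does not prove this theorem: it is quoted (as Theorem~\ref{atv2} and again as Theorem~\ref{thm_{ATV2}}) with attribution to \cite[Theorem~7.1]{ATV2} and used purely as background input for the paper's own results on Type~S'. There is therefore no argument in this paper to compare your proposal against.

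For what it is worth, your outline is in the spirit of the original Artin--Tate--Van den Bergh approach, and the forward implication is close to complete. Two cautions, though. First, in passing from ``$B$ is PI'' to ``$A$ is PI'' via the $g$-adic associated graded, the image of $g$ is in general only a \emph{normalizing} variable (one has $ga=\nu(a)g$ for the Nakayama automorphism $\nu$), so $\operatorname{gr}_g A$ is a skew, not an ordinary, polynomial extension of $B$; you would still need to control the order of the induced automorphism before concluding. Second, as you acknowledge, the contrapositive $(2)\Rightarrow(1)$ is where the real content lies, and in \cite{ATV2} it is obtained by a careful case analysis of $\s$-orbits and the representation theory of $B$ rather than a single growth estimate on $Z(B)$; your sketch stops short of supplying any such mechanism. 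None of this, however, bears on the present paper, which simply imports the result.
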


To characterize {\it geometric} quantum projective spaces {\it finite over their centers}, 
Mori \cite{Mo2} introduced the following notion: 

\begin{defin}[{\cite[Definition 4.6]{Mo2}}]
For a geometric pair $(E, \s)$ 
where $E\subset \mathbb{P}^{n-1}$ and $\sigma\in {\rm Aut}_{k}E$, 
we define 
$$\Aut_k(\PP^{n-1}, E):=\{\phi|_E\in \Aut_k E\mid \phi\in \Aut_k \PP^{n-1}\},
\text{ and}
$$
$$
\|\sigma \|:={\rm inf}\{i\in \mathbb{N}^{+} \mid 
\sigma^{i}\in \Aut_k (\PP^{n-1}, E)\}, 
$$
which is called \textit{the norm of $\sigma$}. 
\end{defin}

As a categorical analogue of Theorem \ref{atv2} 
the author and Mori \cite{IMo} showed the result as follows: 
\begin{thm}[{\cite[Theorem 3.8]{IMo}}]
If $A=\cA(E, \s)$ is a $3$-dimensional quantum polynomial algebra with the Nakayama automorphism $\nu\in \Aut A$, then $||\s||=|\nu^*\s^3|$, so the following are equivalent:
\begin{enumerate}[{\rm (1)}]
\item{} $|\nu^*\s^3|<\infty$. 
\item{} $||\s||<\infty$. 
\item{} $\Projn A$ is finite over its center. 
\end{enumerate} 
\end{thm}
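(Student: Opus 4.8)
The plan is to prove the numerical identity $\|\s\|=|\nu^*\s^3|$ first; the equivalence of (1) and (2) is then immediate, and (2)$\Leftrightarrow$(3) is reduced to Mori's geometric criterion. Throughout I would treat the case where $E$ is a cubic divisor (when $E=\bbP$ every automorphism is linear, so $\|\s\|=1$, and one checks directly from the explicit twisted-polynomial form of $A$ that $|\nu^*\s^3|=1$). Since $E\subset\bbP$ is embedded by the complete linear system of the degree-$3$ invertible sheaf $\mathcal{L}:=\cO_E(1)$, an automorphism $\t\in\Aut_kE$ extends to an element of $\PGL_3$ preserving $E$ exactly when it preserves the polarization, i.e.
\[
H:=\Aut_k(\bbP,E)=\{\t\in\Aut_kE\mid \t^*\mathcal{L}\cong\mathcal{L}\}.
\]
Consequently $\|\s\|=\min\{i>0\mid (\s^i)^*\mathcal{L}\cong\mathcal{L}\}$, so the left-hand side is entirely governed by how the powers of $\s$ move the class of $\mathcal{L}$ inside $\Pic E$.

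Next I would pin down the Nakayama automorphism. Writing $A=\cA(E,\s)$ through its defining (super)potential, $\nu$ is a graded algebra automorphism of $A$, hence is induced by an element of $\PGL_3$; in particular its restriction $\nu^*:=\nu|_E$ lies in $H$. The heart of the argument is the identity
\[
(\s^i)^*\mathcal{L}\cong\mathcal{L}\iff (\nu^*\s^3)^i=\id_E\qquad(i>0),
\]
which I would establish by a computation in $\Pic^0 E$. Setting $M_i:=(\s^i)^*\mathcal{L}\otimes\mathcal{L}^{-1}\in\Pic^0 E$, the cocycle recursion $M_{i+1}\cong M_1\otimes \s^*M_i$ expresses $M_i$ through the action of $\s$ on $\Pic^0 E$; because $\mathcal{L}$ has degree $3$, the condition $M_i\cong\cO_E$ becomes a $3$-torsion condition that matches precisely the vanishing of $(\nu^*\s^3)^i$, the factor $\s^3$ coming from the Gorenstein parameter $3$ and $\nu^*$ correcting for the failure of $A$ to be Calabi--Yau. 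Granting the identity, the set $\{i>0\mid\s^i\in H\}$ coincides with the set of positive multiples of $|\nu^*\s^3|$, whence $\|\s\|=|\nu^*\s^3|$. Conceptually, $\nu^*\s^3$ is just the automorphism of $E$ induced by the Serre functor of $\tails A$ on the point objects, which is why its order controls the finiteness.

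With the identity in hand, (1)$\Leftrightarrow$(2) holds because both assert finiteness of the common value $\|\s\|=|\nu^*\s^3|$. For (2)$\Leftrightarrow$(3) I would invoke the framework behind Mori's definition of the norm: a Zhang twist of $A$ by an element of $H$ changes $\s$ to $\s h$ ($h\in H$) without altering $\tails A=\Projn A$ or the property of being finite over its center, so one may reduce to the case $\|\s\|=m<\infty$ in which $\s^m$ is linear. In that case the $m$-th Veronese (equivalently, the twisted homogeneous coordinate data) produces a large central subalgebra and exhibits $\Projn A$ as the noncommutative space attached to a sheaf of orders module-finite over a commutative projective surface; conversely, finiteness over the center forces the Serre autoequivalence, hence $\nu^*\s^3$, to have finite order, giving $\|\s\|<\infty$.

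The main obstacle is the key identity $(\s^i)^*\mathcal{L}\cong\mathcal{L}\iff(\nu^*\s^3)^i=\id_E$, which requires both the exact determination of $\nu^*$ from the relations and control of $\s^*$ on $\Pic^0 E$; moreover it must be verified uniformly across all degenerate cubics (nodal and cuspidal curves, conic-plus-line, triangle, line-plus-double-line, and triple line), where $\Aut_kE$ and $\Pic E$ take different shapes and the ``translation by a $3$-torsion point'' picture valid for a smooth elliptic curve must be replaced by the appropriate analogue on the generalized Jacobian. Checking that the degree-$3$ twist really produces the clean factor $\s^3$ in each of these cases, and that $\nu^*$ commutes with $\s$ as needed so that $(\nu^*\s^3)^i=(\nu^*)^i\s^{3i}$, is where I expect the bulk of the technical work to lie.
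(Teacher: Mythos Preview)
This statement is not proved in the present paper: it is quoted as \cite[Theorem~3.8]{IMo} (appearing here as Lemma~\ref{q.nu}) and used as a black box. What the paper does supply is the architecture of the argument in \cite{IMo}: one passes from $A$ to a Calabi--Yau algebra $A'$ with $\grmod A\cong\grmod A'$ (Lemma~\ref{IMa-Main2}), uses that $\|\s\|$ is a graded-Morita invariant (Proposition~\ref{lem_Mo2}), and then invokes the Calabi--Yau identity $\|\s'\|=|\s'^{3}|$ (Lemma~\ref{q-main}), which in \cite{IMo} is checked \emph{type by type} via the explicit automorphisms of each point scheme (see the Type~S$'$ calculation reproduced at the start of the proof of Proposition~\ref{prop-S'}). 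The identity $\|\s\|=|\nu^*\s^3|$ then follows by comparing the geometric data of $A$ and its Calabi--Yau twist. So your route, which tries to obtain the identity uniformly through the action of $\s$ on $\Pic E$, is genuinely different from the case-by-case reduction-to-CY strategy that underlies the cited result.

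That said, your proposal has a real gap at its centre. You assert
\[
(\s^i)^*\mathcal L\cong\mathcal L\ \Longleftrightarrow\ (\nu^*\s^3)^i=\id_E
\]
but give no mechanism that produces $\nu^*$ on the right-hand side from the Picard-group recursion $M_{i+1}\cong M_1\otimes\s^*M_i$ on the left. The phrase ``$\nu^*$ correcting for the failure of $A$ to be Calabi--Yau'' is a slogan, not an argument: one needs an explicit formula relating $\nu^*$ to $(\s^3)^*\mathcal L\otimes\mathcal L^{-1}$ (or to the normalizing element in degree~$3$), and that formula is precisely what requires either the type-by-type analysis you defer to the end or an independent structural input you have not supplied. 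You also need $\nu^*\in H$ to commute with $\s$ in order to write $(\nu^*\s^3)^i=(\nu^*)^i\s^{3i}$; this is not automatic and fails for some pairs in $\Aut_kE$. Finally, your (2)$\Leftrightarrow$(3) sketch is loose: a Zhang twist does not in general preserve ``$A$ is finite over its centre'' (indeed, this failure is why Definition~\ref{def_{Z(proj)}} is formulated categorically), and your Veronese construction does not visibly produce a $3$-dimensional quantum polynomial algebra $A'$ with $\Projn A\cong\Projn A'$, which is what the definition demands. The paper's route sidesteps all of this by handing the whole equivalence to the CY representative $A'$ via Lemma~\ref{q-main}.
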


In this paper, 
we will prove the following result for Type S' algebra $A=\mathcal{A}(E,\si)$, 
where $E\subset \mathbb{P}^{2}$ is a union of a line and a conic meeting at two points, 
and $\si\in \mathrm{Aut}_{k}E$. 
\begin{thm}[Theorem \ref{Main-I}]
For a $3$-dimensional quantum polynomial algebra $A$ of Type S', 
the following are equivalent: 
\begin{enumerate}[{\rm (1)}]
\item $\Projn A$ is finite over its center. 
\item The Beilinson algebra $\nabla A$ of $A$ is $2$-representation tame 
in the sense of Herschend-Iyama-Oppermann \cite{HIO}. 
\item The isomorphism classes of simple $2$-regular modules over $\nabla A$ are parameterized by $\PP^2$.
\end{enumerate}
\end{thm}
Note that these equivalences are shown for Type S algebra $A=\mathcal{A}(E,\si)$
in \cite [Theorem 4.17, Theorem 4.21]{Mo2}, 
where where $E\subset \mathbb{P}^{2}$ is a triangle, 
and $\si\in \mathrm{Aut}_{k}E$. 


This paper is organized as follows: 
In Section 2, we recall the definitions of a quantum polynomial algebra, a geometric algebra and quantum projective spaces. Also, we describe the characterization of quantum polynomial algebras finite over their centers. 
In Section 3, in order to prove our main results, we calculate the center of Type S' algebra $A$
(Proposition \ref{prop-S'}). 
Finally, in Section 4, we prove our main results (Theorem \ref{thm1}, Theorem \ref{thm2}, Theorem \ref{Main-I}). 


\section{Preliminaries}

\subsection{Quantum polynomial algebras, geometric algebras and quantum projective spaces}
First, we recall the definitions of quantum polynomial algebras and geometric algebras. 

\begin{defin}
A right noetherian graded algebra $A$ is called a 
{\it $d$-dimensional quantum polynomial algebra} if 
\begin{enumerate}[{\rm (1)}]
\item{} $\operatorname{gldim} A=d$, 
\item{} (\textit{Gorenstein condition}) $\Ext^i_A(k, A)\cong \begin{cases}  k & \textnormal { if } i=d, \\
0 & \textnormal { if }  i\neq d, \end{cases}$ 
and 
\item{} $H_A(t):=\sum_{i=0}^{\infty}(\dim _k A_i)t^i=(1-t)^{-d}$ \quad
($H_{A}(t)$ is called \textit{the Hilbert series of $A$}).
\end{enumerate}
\end{defin}
Note that a quantum polynomial algebra is a non-commutative analogue of 
a commutative polynomial algebra. 

\begin{defin}[{\cite[Definition 4.3]{Mo}}]
\label{geometric}
A geometric pair $(E,\sigma)$ consists of a projective scheme $E \subset \PP^{n-1}$ and $\sigma \in {\rm Aut}_{k}\,E$. 
For a quadratic algebra $A=k\<x_1, \dots, x_n\>/I$ where $I\lhd k\<x_1, \dots, x_n\>$ is a homogeneous ideal generated by elements of degree $2$, we define 
$$
\mathcal{V}(I_2):=\{ (p,q)\in\mathbb{P}^{n-1}\times \PP^{n-1}
\,|\,f(p,q)=0\,\,{\rm for\,\,any\,\,} f \in I_2 \}.
$$
\begin{enumerate}[{\rm (1)}]
\item We say that {\it $A$ satisfies {\rm (G$1$)}} if there exists a geometric pair $(E,\sigma)$ such that
$$
\mathcal{V}(I_2)=\{ (p,\sigma(p)) \in \mathbb{P}^{n-1}\times \PP^{n-1}
\,|\,p \in E \}.
$$
In this case, we write $\mathcal{P}(A)=(E,\sigma)$, 
and call $E$ the {\it point scheme} of $A$. 
\item We say that {\it $A$ satisfies {\rm (G$2$)}} if there exists a geometric pair $(E,\sigma)$ such that
$$
I_2=\{ f \in k\<x_1, \dots, x_n\>_2
\,|\,f(p,\sigma(p))=0\,\,{\rm for\,\,any\,\,} p \in E \}.
$$
In this case, we write $A=\mathcal{A}(E,\sigma)$.
\item A quadratic algebra $A$ is called {\it geometric} if $A$ satisfies both (G1) and (G2)
with $A=\mathcal{A}(\mathcal{P}(A))$.
\end{enumerate}
\end{defin}

We remark that a $3$-dimensional quantum polynomial algebra 
is exactly the same as a $3$-dimensional quadratic AS-regular algebra. 
\begin{thm}[\cite{ATV1}]
\label{ATV1}
Every $3$-dimensional quantum polynomial algebra is a geometric algebra where the point scheme is either the projective plane $\PP^2$ or a cubic divisor in $\PP^2$. 
\end{thm}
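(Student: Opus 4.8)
The plan is to realize the defining relations of $A$ geometrically as a matrix of linear forms and then read off the point scheme from its determinant. Since $A$ is a $3$-dimensional quantum polynomial algebra, its Hilbert series $(1-t)^{-3}$ forces $\dim_k A_1 = 3$ and $\dim_k A_2 = 6$; writing $A = \kang/(I)$ with $I$ generated in degree $2$, this gives $\dim_k I_2 = 9 - 6 = 3$, so $A$ is a quadratic algebra on three generators with exactly three relations. I would then fix a basis $f_1, f_2, f_3$ of $I_2 \subset V \otimes V$, where $V = A_1$, multilinearize each $f_k = \sum_{i,j} c^{(k)}_{ij} x_i x_j$ into a bilinear form on $\PP^2 \times \PP^2$, and collect these into a single $3 \times 3$ matrix $M(x)$ whose entries are linear forms, so that the three bilinear forms evaluated at $(p,q)$ are the entries of $M(p)\,q$. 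In these terms,
$$
\mathcal{V}(I_2) = \{(p,q) \in \PP^2 \times \PP^2 \mid M(p)\,q = 0\}.
$$

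Next I would extract the point scheme from $M$. For fixed $p$, the fiber of the first projection $\mathrm{pr}_1$ is nonempty exactly when $M(p)$ is singular, i.e. $\det M(p) = 0$; since $M$ has linear entries, $\det M$ is either identically zero or a cubic form. This yields the desired dichotomy: if $\det M \equiv 0$ then every fiber is nonempty and $\mathrm{pr}_1(\mathcal{V}(I_2))$ is all of $\PP^2$, giving $E = \PP^2$; if $\det M \not\equiv 0$ then the image lies in the cubic divisor $V(\det M)$, giving a cubic $E \subset \PP^2$, possibly reducible or nonreduced, which is exactly what permits the degenerate configurations such as Type S'. Setting $E := \mathrm{pr}_1(\mathcal{V}(I_2))$, I would then show that over the locus where $M(p)$ has rank exactly $2$ the kernel is one-dimensional, so the fiber is a single point $q = \sigma(p)$, and that this assignment extends to a morphism $\sigma \colon E \to E$; running the same argument for the second projection with the analogous matrix $M'(q)$ obtained by grouping the second variable produces the inverse, so $\sigma \in \Aut_k E$ and $\mathcal{V}(I_2) = \{(p,\sigma(p)) \mid p \in E\}$, establishing (G1).

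To complete the identification $A = \mathcal{A}(E,\sigma)$ I must verify (G2), namely that $I_2$ is \emph{all} of the bilinear forms vanishing on the graph of $\sigma$, i.e. that this space has dimension exactly $3$ and not more. Here the regularity hypotheses enter decisively: a $3$-dimensional quantum polynomial algebra is Koszul and a domain, and the Gorenstein condition together with the Hilbert series pins down the minimal free resolution of $k$ as the self-dual length-$3$ complex $0 \to A(-3) \to A(-2)^3 \to A(-1)^3 \to A \to k \to 0$, whose middle differential is precisely the relation matrix $M$. I would use this resolution to control the rank behaviour of $M$, forcing the rank-$\le 1$ locus not to spoil the generic graph structure, and to carry out the dimension count showing the forms vanishing on the graph are exactly $I_2$. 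Conceptually, this is the statement that point modules over $A$ are parametrized by $E$ with the shift functor corresponding to $\sigma$.

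The main obstacle is precisely this last interaction between regularity and geometry: a priori $M(p)$ could drop to rank $\le 1$ along a positive-dimensional locus, which would destroy the graph structure and inflate the space of vanishing forms beyond dimension $3$, so that $\mathcal{V}(I_2)$ would fail to be the graph of an automorphism. Ruling this out, equivalently proving the exact dimension count in (G2), is where one genuinely needs that $A$ is regular rather than merely quadratic, and it is the technical heart of the Artin--Tate--Van den Bergh correspondence. Once it is in place, the dichotomy $E = \PP^2$ or $E$ a cubic divisor is immediate from the degree of $\det M$.
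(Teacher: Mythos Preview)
The paper does not give a proof of this theorem at all: it is stated purely as a background result with a citation to \cite{ATV1}, so there is no argument in the paper to compare your proposal against. What you have written is a faithful outline of the original Artin--Tate--Van den Bergh approach (multilinearize the three quadratic relations into a $3\times 3$ matrix $M$ of linear forms, read $E$ off from $\det M$, and recover $\sigma$ from the one-dimensional kernel), and you have correctly isolated the genuine difficulty, namely ruling out a positive-dimensional rank-$\leq 1$ locus and establishing the exact dimension count for (G2). That step is indeed where the AS-regularity hypotheses do the work, via the shape of the minimal resolution of $k$; your sketch stops short of carrying it out, but as a plan it is the right one and matches the cited reference rather than anything in the present paper.
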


The type of a $3$-dimensional quantum polynomial algebra $A=\cA(E, \s)$ is defined 
in terms of the point scheme $E\subset \PP^2$ 
(for details, see \cite[Subsection 2.3]{IMa1}): 
\begin{description}
\item[{\rm Type P}] 
      $E$ is $\mathbb{P}^{2}$. 
\item [{\rm Type S}] 
      $E$ is a triangle. 
\item[{\rm Type S'}] 
      $E$ is a union of a line and a conic meeting at two points. 
\item[{\rm Type T}]
      $E$ is a union of three lines meeting at one point. 
\item[{\rm Type T'}]
      $E$ is a union of a line and a conic meeting at one point. 
\item[{\rm Type NC}]
          $E$ is a nodal cubic curve. 
\item[{\rm Type CC}] $E$ is a cuspidal cubic curve. 
\item[{\rm Type TL}] $E$ is a triple line. 
\item[{\rm Type WL}]$E$ is a union of a double line and a line. 
\item[{\rm Type EC}] $E$ is an elliptic curve. 
\end{description}

For a $3$-dimensional quantum polynomial algebra 
$A=\mathcal{A}(E,\sigma)$,  
Artin-Tate-Van den Bergh \cite{ATV2} 
gave the criterion whether $A=\mathcal{A}(E,\sigma)$ 
is finite over its center or not by using geometric data. 

\begin{thm}[{\cite[Theorem 7.1]{ATV2}}]
\label{thm_{ATV2}}
Let $A=\mathcal{A}(E,\sigma)$ be 
a $3$-dimensional quantum polynomial algebra. 
Then the following are equivalent: 
\begin{enumerate}[{\rm (1)}]
\item The order $|\sigma|$ of $\sigma$ is finite. 
\item $A$ is finite over its center. 
\end{enumerate}
\end{thm}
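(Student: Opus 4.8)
The plan is to prove the equivalence through the twisted homogeneous coordinate ring $B=B(E,\sigma,\mathcal{L})$ attached to the geometric pair $(E,\sigma)$, where $\mathcal{L}=\mathcal{O}_E(1)$, $\mathcal{L}_n=\mathcal{L}\otimes\sigma^*\mathcal{L}\otimes\cdots\otimes(\sigma^{n-1})^*\mathcal{L}$, and $B=\bigoplus_{n\ge 0}H^0(E,\mathcal{L}_n)$ under the $\sigma$-twisted multiplication. By Theorem \ref{ATV1} the point scheme $E$ is either $\mathbb{P}^2$ (Type P) or a cubic divisor. When $E=\mathbb{P}^2$, a Hilbert-series comparison (both sides have $H(t)=(1-t)^{-3}$ and are generated in degree $1$) shows the canonical surjection $A\twoheadrightarrow B(\mathbb{P}^2,\sigma,\mathcal{O}(1))$ is an isomorphism, so $A$ is itself a twisted coordinate ring. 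When $E$ is a cubic divisor, $A$ carries a normalizing regular element $g\in A_3$ with $A/gA\cong B(E,\sigma,\mathcal{O}_E(1))$; here $\operatorname{GKdim}A=3$ and $\operatorname{GKdim}B=2$. In both cases the problem reduces to controlling the center of $B$ and then lifting across $g$.

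For the implication $(1)\Rightarrow(2)$, I would assume $m:=|\sigma|<\infty$ and produce a large central commutative subalgebra. Since $\sigma^m=\mathrm{id}$, the bundle $\mathcal{M}:=\mathcal{L}_m$ satisfies $\sigma^*\mathcal{M}\cong\mathcal{M}$; fixing such an isomorphism trivializes the twisting in degrees divisible by $m$, so the $\sigma$-invariant sections $R:=\bigoplus_{k\ge 0}H^0(E,\mathcal{M}^k)^{\sigma}$ form a commutative graded subring of $B$ lying in the center $Z(B)$. A dimension count gives $\operatorname{GKdim}R=\operatorname{GKdim}B$, and since $B$ is generated in degree $1$ each graded piece is pinned within $m$ steps of a multiple of $m$; Noetherianity then forces $B$ to be a finite $R$-module, hence finite over $Z(B)$. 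In the cubic case I would lift this across the filtration $A\supset gA\supset g^2A\supset\cdots$: the normalizing automorphism $\theta$ of $g$ is controlled by $\sigma$, so a suitable power $g^N$ is central when $|\sigma|<\infty$, and a short induction (with $A/gA=B$ finite over its center) upgrades finiteness from $B$ to $A$.

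For $(2)\Rightarrow(1)$, I would argue contrapositively: if $|\sigma|=\infty$ the center of $B$ is too small for $B$ to be module-finite over it. The central degree-$n$ part of $B$ consists of $\sigma$-invariant sections of $\mathcal{L}_n$, and for an infinite-order automorphism of a cubic divisor these invariants are severely constrained — governed by the finite set of $\sigma$-periodic points and $\sigma$-invariant components — so that $\operatorname{GKdim}Z(B)<\operatorname{GKdim}B=2$. Since a Noetherian algebra finite over a central subring $Z$ satisfies $\operatorname{GKdim}B=\operatorname{GKdim}Z$, this rules out finiteness of $B$, and a fortiori of $A$, over its center: were $A$ finite over $Z(A)$, then $B=A/gA$ would be finite over the image of $Z(A)$, hence over $Z(B)$. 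The Type P case runs in parallel, with $A=B(\mathbb{P}^2,\sigma,\mathcal{O}(1))$ a twist (in the sense of Zhang) of $k[x,y,z]$ by $\sigma\in\PGL_3$ and the same invariant-section analysis carried out on $\mathbb{P}^2$.

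The main obstacle will be the precise determination of $Z(B)$ in the infinite-order case — in particular showing that $\sigma$-invariant sections cannot accumulate fast enough to keep $\operatorname{GKdim}Z(B)=2$ — together with making the lifting step across $g$ rigorous, since $g$ is only normalizing and one must verify that a power of $g$ becomes central exactly when $|\sigma|<\infty$. Controlling these two geometric and homological points, rather than the surrounding reductions, is where the real work lies.
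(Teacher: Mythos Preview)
The paper does not prove this theorem; it is quoted from \cite[Theorem 7.1]{ATV2} (Artin--Tate--Van den Bergh) and used throughout as a black box, with no argument supplied. There is therefore no proof in this paper to compare your proposal against.

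That said, your outline follows the same architecture as the original proof in \cite{ATV2}: reduction to the twisted homogeneous coordinate ring $B=B(E,\sigma,\mathcal{L})$ via the regular normalizing element $g\in A_3$ with $A/gA\cong B$, and then controlling $Z(B)$ in terms of $\sigma$-invariant sections. The two points you flag as obstacles --- showing $\operatorname{GKdim}Z(B)<\operatorname{GKdim}B$ when $|\sigma|=\infty$, and handling the lift across $g$ (in particular that some power of $g$ is central precisely when $|\sigma|<\infty$) --- are exactly where the substance of \cite{ATV2} lies, and your sketch does not resolve them; but that is appropriate here, since the present paper simply invokes the result.
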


Noncommutative affine and projective schemes were introduced by Artin-Zhang \cite {AZ}. 
Before recalling the definitions of noncommutative affine and projective schemes, 
we start to recall the following definitions: 
\begin{defin} 
{\it A noncommutative scheme {\rm (}over $k${\rm )}} is a pair $X=(\mod X, \cO_X)$ 
consisting of a $K$-linear abelian category $\mod X$ and an object $\cO_X\in \mod X$. 
We say that {\it two noncommutative schemes $X=(\mod X, \cO_X)$ 
and $Y=(\mod Y, \cO_Y)$ are isomorphic}, denoted by $X\cong Y$, 
if there exists an equivalence functor $F:\mod X\to \mod Y$ such that $F(\cO_X)\cong \cO_Y$. 
\end{defin} 



\begin{defin} 
If $R$ is a right noetherian algebra, then we define {\it the noncommutative affine scheme associated to $R$} by $\Specn R=(\mod R, R)$ where $\mod R$ is the category of finitely generated right $R$-modules and $R\in \mod R$ is the regular right module. 
\end{defin} 


\begin{defin}[\cite{AZ}] 
If $A$ is a right noetherian graded algebra, $\grmod A$ is the category of finitely generated graded right $A$-modules, and $\tors A$ is the full subcategory of $\grmod A$ consisting of finite dimensional modules over $k$, then we define {\it the noncommutative projective scheme associated to $A$} by $\Projn A=(\tails A, \pi A)$ where $\tails A:=\grmod A/\tors A$ is the quotient category, $\pi:\grmod A\to \tails A$ is the quotient functor, and $A\in \grmod A$ is the regular graded right module. If $A$ is a $d$-dimensional quantum polynomial algebra, then we call $\Projn A$ {\it a quantum $\PP^{d-1}$}.  In particular,  if $d=3$, then we call $\Projn A$ {\it a quantum projective plane}.   
\end{defin}


Here, we recall the definition of points of a noncommutative scheme. 

\begin{defin}
Let $R$ be an algebra. 
{\it A point of $\Specn R$} is an isomorphism class of a simple right $R$-module $M\in \mod R$ such that $\dim_k M<\infty$. 
A point $M$ is called {\it fat} if $\dim _k M>1$. 
\end{defin} 

\begin{defin} Let $A$ be a graded algebra.  
{\it A point of $\Projn A$} is an isomorphism class of a simple object of the form $\pi M\in \tails A$ where $M\in \grmod A$ is a graded right $A$-module such that $\lim_{i\to \infty}\dim _kM_i<\infty$.  A point $\pi M$ is called {\it fat} if $\lim _{i\to \infty}\dim _k M_i>1$, and, in this case, $M$ is called {\it a fat point module over $A$}.   
\end{defin} 

\begin{rem} 
\begin{enumerate}[(1)]
\item If $A$ is a graded commutative algebra and $p\in \Proj A$ is a closed point, then $\pi (A/\frak m_p)\in \tails A$ is a point where $\frak m_p$ is the homogeneous maximal ideal of $A$ corresponding to $p$.  In fact, this gives a bijection between the set of closed points of $\Proj A$ and the set of points of $\Projn A$.  In this commutative case, there exists no fat point.  
\item It is unclear that fat points are preserved under isomorphisms of $\Projn A$ in general. However, fat point modules are preserved under graded Morita equivalences, so if $A$ and $A'$ are both $3$-dimensional quantum polynomial algebras such that $\Projn A\cong \Projn A'$, then there exists a natural bijection between the set of fat points of $\Projn A$ and that of $\Projn A'$ by Lemma \ref{lem.AOU}.
\end{enumerate}
\end{rem} 

For the reason that the property that $A$ is finite over its center 
is not preserved under isomorphisms of noncommutative projective schemes $\Projn A$, 
we introduced the following definition in \cite{IMo}.  

\begin{defin}[{\cite[Definition 2.6]{IMo}, cf. \cite[Definition 4.14]{Mo2}}]
\label{def_{Z(proj)}}
Let $A$ be a $d$-dimensional quantum polynomial algebra. 
We say that $\mathsf{Proj}_{{\rm nc}}A$ is \textit{finite over its center} 
if there exists a $d$-dimensional quantum polynomial algebra $A'$ 
finite over its center such that 
$\Projn A\cong \Projn A'$.  
\end{defin}

Note that, for a $3$-dimensional quantum polynomial algebra, 
Definition \ref{def_{Z(proj)}} coincides with the original definition in \cite[Definition 4.14]{Mo2} by \cite[Corollary A.10]{AOU} as follows:  

\begin{lem}[{\cite[Corollary A.10]{AOU}}]
\label{lem.AOU}
Let $A$ and $A'$ be $3$-dimensional quantum polynomial algebras. 
Then $\grmod A\cong \grmod A'$ if and only if $\Projn A\cong \Projn A'$. 
\end{lem}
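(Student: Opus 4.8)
\textbf{The plan is to} establish both directions of the stated equivalence by reducing it to a statement about graded module categories. The key structural fact I would exploit is that for a graded algebra $A$, the noncommutative projective scheme $\Projn A = (\tails A, \pi A)$ is constructed functorially from $\grmod A$ by taking the Serre quotient $\tails A = \grmod A/\tors A$ together with the distinguished object $\pi A$. The direction $\grmod A \cong \grmod A' \Rightarrow \Projn A \cong \Projn A'$ is therefore the formal and routine half: given a $k$-linear equivalence $F\colon \grmod A \to \grmod A'$, I would first check that $F$ carries $\tors A$ onto $\tors A'$, so that it descends to an equivalence $\bar F\colon \tails A \to \tails A'$ of the quotient categories, and then verify $\bar F(\pi A)\cong \pi A'$, which realizes the isomorphism of noncommutative schemes in the sense of the definition recalled above. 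The preservation of $\tors$ under a graded equivalence is where I would need to be careful, since $\tors$ is characterized internally (finite-dimensionality over $k$), and I would phrase this in terms of a categorical characterization preserved by any $k$-linear equivalence.

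\textbf{The harder and more substantive direction} is the converse $\Projn A \cong \Projn A' \Rightarrow \grmod A \cong \grmod A'$, and this is the heart of the cited result \cite[Corollary A.10]{AOU}. The obstruction is that passing from $\grmod A$ to $\tails A$ discards exactly the torsion (finite-dimensional) information, so an abstract equivalence $\tails A \cong \tails A'$ sending $\pi A$ to $\pi A'$ does not obviously lift back to the graded level; one must reconstruct $\grmod A$ from the projective data. My approach would be to invoke the AS-regularity machinery available for $3$-dimensional quantum polynomial algebras: such an $A$ satisfies the Artin--Zhang conditions (in particular $\chi$ and cohomological finiteness), so the section functor $\omega\colon \tails A \to \grmod A$ exists and one has $A \cong \bigoplus_{i\in\ZZ}\Hom_{\tails A}(\pi A, (\pi A)(i))$ recovering $A$ up to graded isomorphism from the pair $(\tails A, \pi A)$ together with its shift functor. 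This is precisely where dimension $3$ and the quantum-polynomial hypothesis are used, because for general graded algebras no such reconstruction holds.

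\textbf{The technical crux} I anticipate is handling the twisting/shift functor correctly. An equivalence $\tails A \cong \tails A'$ with $\pi A \mapsto \pi A'$ need not a priori commute with the respective shift (autoequivalence) functors, yet the reconstruction of $A$ as an $\ZZ$-graded algebra uses the whole orbit $\{(\pi A)(i)\}_{i\in\ZZ}$. To resolve this I would appeal to the rigidity of the polarization: for a quantum $\PP^2$ the shift functor $(1)$ is intrinsic (it can be recovered from the ample sequence / from the Serre functor structure on the derived category), so any scheme isomorphism automatically respects it up to the required compatibility, and the graded $\Hom$-algebra is then transported. Once $A$ and $A'$ are recovered as the same graded endomorphism algebra of their respective structure objects, the equivalence $\grmod A \cong \grmod A'$ follows by transporting along $\omega$. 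I would organize the write-up so that the formal direction is dispatched in a few lines and the bulk of the argument, together with the citation to \cite{AZ} for the Artin--Zhang reconstruction and to \cite{AOU} for the dimension-$3$ specifics, carries the converse.
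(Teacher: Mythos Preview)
The paper does not prove this lemma: it is stated purely as a citation of \cite[Corollary A.10]{AOU} and no argument is given. There is therefore no in-paper proof against which to compare your proposal.

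On the proposal itself: your outline is a reasonable roadmap, and you correctly locate the substantive content in the converse direction. Two points deserve sharpening. In the forward direction, an arbitrary $k$-linear equivalence $F\colon \grmod A \to \grmod A'$ need not send $A$ to $A'$; since $A$ is an indecomposable projective generator and the indecomposable graded projectives over a connected graded algebra are the shifts of the regular module, one gets $F(A)\cong A'(n)$ for some $n$, and a degree shift must be inserted before one can conclude $\bar F(\pi A)\cong \pi A'$. This is easily fixed but is not the verification-free step you present it as. In the backward direction, your resolution of the ``technical crux'' is where the real work hides and remains a sketch: the Serre functor on $\mathcal D^b(\tails A)$ recovers only the third power of the degree shift (up to homological shift), not the shift-by-one itself, so upgrading an equivalence $(\tails A,\pi A)\cong(\tails A',\pi A')$ to one compatible with the polarization is a genuinely nontrivial statement specific to quantum $\PP^2$'s. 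That is exactly what \cite{AOU} establishes, and the present paper simply defers the entire argument to that reference.
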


\subsection{Characterization of quantum polynomial algebras finite over their centers}

To characterize \lq\lq geometric\rq\rq quantum projective spaces finite over their centers, 
the following notion was introduced in \cite{Mo2}: 

\begin{defin}[{\cite[Definition 4.6]{Mo2}}]
For a geometric pair $(E, \s)$ 
where $E\subset \mathbb{P}^{n-1}$ and $\sigma\in {\rm Aut}_{k}E$, 
we define 
$$\Aut_k(\PP^{n-1}, E):=\{\phi|_E\in \Aut_k E\mid \phi\in \Aut_k \PP^{n-1}\},
\text{ and}
$$
$$
\|\sigma \|:={\rm inf}\{i\in \mathbb{N}^{+} \mid 
\sigma^{i}\in \Aut_k (\PP^{n-1}, E)\}, 
$$
which is called \textit{the norm of $\sigma$}. 
\end{defin}
For a geometric pair $(E, \s)$, clearly $\|\sigma\| \leq |\sigma|$. 
The following are the basic properties of the norm $\|\s\|$ of $\sigma$.  

\begin{prop}[{\cite [Lemma 2.5]{MU1}, \cite[Lemma 4.16 (1)]{Mo2}}]
\label{lem_Mo2}
Let $A$ and $A'$ be $d$-dimensional quantum polynomial algebras 
satisfying {\rm (G1)} with 
$\mathcal{P}(A)=(E,\sigma)$ and $\mathcal{P}(A')=(E',\sigma')$. 
\begin{enumerate}[{\rm (1)}]
\item{} If $A\cong A'$, then $E\cong E'$ and $|\s|=|\s'|$. 
\item{} If ${\rm grmod}\,A\cong {\rm grmod}\,A'$, then $E\cong E'$ and $||\s||=||\s'||$. 
\end{enumerate}
In particular, if $A$ and $A'$ are $3$-dimensional quantum polynomial algebras such that $\Projn A\cong \Projn A'$, then $E\cong E'$ 
{\rm (}that is, $A$ and $A'$ are of the same type{\rm )} and $||\s||=||\s'||$.  
\end{prop}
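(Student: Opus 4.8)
The plan is to prove the two numbered statements separately and then to read off the final assertion from Lemma~\ref{lem.AOU} together with the type classification. Part (1) is about genuine isomorphisms and should be clean; part (2) is about the weaker relation of graded equivalence and is where the real work lies.

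For (1), I would exploit that the point scheme is functorial in graded-algebra isomorphisms. Writing $A=k\<x_1,\dots,x_n\>/I$ and $A'=k\<x_1,\dots,x_n\>/I'$, a graded isomorphism $f\colon A\to A'$ is determined by its degree-one part $f_1\colon A_1\to A_1'$, a linear isomorphism; fixing the generating sets, $f_1$ is given by a matrix in $\mathrm{GL}_n(k)$ and induces a projective linear automorphism $g\in\Aut_k\PP^{n-1}$. Since $f$ carries $I$ onto $I'$ it carries $I_2$ onto $I_2'$, so $g\times g$ maps $\mathcal{V}(I_2)$ isomorphically onto $\mathcal{V}(I_2')$. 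Under {\rm (G1)} these sets are the graphs $\{(p,\sigma p):p\in E\}$ and $\{(q,\sigma'q):q\in E'\}$; comparing first and second coordinates gives $E'=g(E)$ and $\sigma'=g\sigma g^{-1}$. Hence $E\cong E'$, and conjugate automorphisms have equal order, so $|\sigma|=|\sigma'|$.

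For (2), a graded equivalence $\grmod A\cong\grmod A'$ is weaker than an isomorphism, so I would invoke the structure theory of such equivalences (Zhang): $A'$ is isomorphic to a twist $A^{\theta}$ of $A$ by a twisting system $\theta=\{\theta_i\}$. Two geometric facts then drive the argument. First, point modules are characterized categorically, so the equivalence matches the scheme parametrizing point modules of $A$ with that of $A'$; since these are $E$ and $E'$, we obtain $E\cong E'$. Second, the shift functor is not preserved by the equivalence but is altered by the twisting system, and its induced action on the point scheme changes $\sigma$ only by a \emph{linear} automorphism, i.e.\ by an element of $\Aut_k(\PP^{n-1},E)$ coming from the degree-one part of $\theta$. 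The heart of the matter is then to show that this modification leaves the norm unchanged, $\|\sigma\|=\|\sigma'\|$. This is the step that requires genuine care, and I expect it to be the main obstacle: because $\Aut_k(\PP^{n-1},E)$ need not be normal in $\Aut_k E$ (already for $E$ an elliptic cubic, conjugating an origin-reversing linear automorphism by a translation can leave $\Aut_k(\PP^{n-1},E)$), one cannot simply pass to a quotient group and compare orders, and one cannot allow composition with an arbitrary linear automorphism, which in general does \emph{not} preserve $\|\cdot\|$. Instead I would use the precise form of the admissible twists produced by the structure theorem to verify directly that $\sigma^{i}\in\Aut_k(\PP^{n-1},E)$ if and only if $(\sigma')^{i}\in\Aut_k(\PP^{n-1},E)$ for every $i\in\NN^+$, whence the two infima defining $\|\sigma\|$ and $\|\sigma'\|$ coincide. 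Controlling exactly how the admissible twists move $\sigma$ within its coset structure relative to $\Aut_k(\PP^{n-1},E)$ is the real content of \cite[Lemma~2.5]{MU1} and \cite[Lemma~4.16(1)]{Mo2}.

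Finally, for the concluding statement about $3$-dimensional algebras, Lemma~\ref{lem.AOU} gives $\Projn A\cong\Projn A'$ if and only if $\grmod A\cong\grmod A'$, so part (2) applies verbatim and yields $E\cong E'$ and $\|\sigma\|=\|\sigma'\|$. Since the type of a $3$-dimensional quantum polynomial algebra is determined by its point scheme $E$ (Theorem~\ref{ATV1} and the type list), the isomorphism $E\cong E'$ shows that $A$ and $A'$ are of the same type.
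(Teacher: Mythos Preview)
The paper does not give its own proof of this proposition: it is quoted verbatim from \cite[Lemma~2.5]{MU1} and \cite[Lemma~4.16(1)]{Mo2}, so there is nothing in the present paper to compare your argument against. Your outline in fact supplies more detail than the paper does.

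That said, your proposal is an outline rather than a proof, and the gap is exactly where you locate it. Part~(1) is fine: a graded isomorphism is determined by its degree-one part, and the resulting $g\in\PGL_n(k)$ carries the graph of $\sigma$ to the graph of $\sigma'$, giving $E'=g(E)$ and $\sigma'=g\sigma g^{-1}$. The derivation of the final sentence from part~(2) via Lemma~\ref{lem.AOU} is also correct. For part~(2), however, you set up the Zhang-twist framework, correctly observe that $\Aut_k(\PP^{n-1},E)$ is generally not normal in $\Aut_kE$ so that a naive coset argument fails, and then write that you ``would use the precise form of the admissible twists \dots\ to verify directly'' that $\sigma^i\in\Aut_k(\PP^{n-1},E)$ iff $(\sigma')^i\in\Aut_k(\PP^{n-1},E)$ --- and immediately attribute this verification to \cite[Lemma~2.5]{MU1} and \cite[Lemma~4.16(1)]{Mo2}. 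But those references \emph{are} the statement you are asked to prove, so this is circular: you have identified the crux without discharging it. A complete argument would need to spell out, for the twisting systems arising from Zhang's theorem, exactly how $\sigma$ is transformed (e.g.\ $\sigma'=\tau|_E\circ\sigma$ with $\tau\in\Aut_k(\PP^{n-1},E)$ induced by $\theta_1$) and then prove the biconditional on $\sigma^i$ directly, which is what the cited papers do.
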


At the end of this subsection, we prepare to describe the purpose of this paper. 
For a quantum polynomial algebra, a Calabi-Yau quantum polynomial algebra as follows is easier to handle.

\begin{defin}
A quantum polynomial algebra $A$ is called {\it Calabi-Yau} 
if the Nakayama automorphism of $A$ is the identity.
\end{defin}   

The following lemma claims that every quantum projective plane has a $3$-dimensional Calabi-Yau quantum polynomial algebra as a homogeneous coordinate ring. 

\begin{lem}[{\cite[Theorem 4.4]{IMa2}}]
\label{IMa-Main2}
For every 
$3$-dimensional quantum polynomial algebra $A$,
there exists a $3$-dimensional Calabi-Yau quantum polynomial algebra 
$A'$
such that $\grmod A\cong \grmod A'$
so that $\Projn A\cong \Projn A'$. 
\end{lem}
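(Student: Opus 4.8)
The plan is to use graded twists as the engine. Recall Zhang's twisting theorem: for a graded algebra automorphism $\theta\in\Aut A$ one forms the Zhang twist $A^{\theta}$, which is again a $3$-dimensional quantum polynomial algebra and satisfies $\grmod A\cong\grmod A^{\theta}$, hence $\Projn A\cong\Projn A^{\theta}$. It therefore suffices to exhibit a single $\theta$ for which the Nakayama automorphism of $A^{\theta}$ is the identity, i.e. for which $A^{\theta}$ is Calabi--Yau. Because a $3$-dimensional quantum polynomial algebra is generated in degree $1$, both $\theta$ and the Nakayama automorphism $\nu$ are determined by their linear parts on $V:=A_{1}$, so the entire search can be carried out inside $\mathrm{GL}(V)$.

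To locate the right $\theta$ I would pass to the twisted-superpotential description. Every $3$-dimensional quantum polynomial algebra is the derivation-quotient algebra $\partial\mathsf{w}$ of a nondegenerate potential $\mathsf{w}\in V^{\otimes 3}$ that is invariant under the \emph{Nakayama-twisted} cyclic permutation $u\otimes v\otimes w\mapsto \nu(w)\otimes u\otimes v$, and $A$ is Calabi--Yau exactly when $\mathsf{w}$ is invariant under the ordinary cyclic permutation. A Zhang twist by $\theta$ acts on the potential through the grading-dependent substitution $\id\otimes\theta\otimes\theta^{2}$, and (taking $\theta$ to commute with $\nu$) one checks that this intertwines the $\nu$-twisted cyclic permutation with the $\theta^{-3}\nu$-twisted one, the exponent $3$ reflecting the three tensor factors, equivalently the global dimension. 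Hence choosing $\theta$ to be a cube root of $\nu$ converts $\mathsf{w}$ into a genuinely cyclically invariant potential; it remains nondegenerate because $\theta$ is invertible, so $A':=\partial\big((\id\otimes\theta\otimes\theta^{2})\mathsf{w}\big)$ is again a $3$-dimensional quantum polynomial algebra, now Calabi--Yau, with $\grmod A\cong\grmod A'$.

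The construction of the cube root is where the work concentrates. Over the algebraically closed field $k$ of characteristic $0$ every element of $\mathrm{GL}(V)$ has a cube root, obtained from the multiplicative Jordan decomposition by extracting cube roots of the eigenvalues on the semisimple part and a power-series cube root on the unipotent part; in particular one can arrange $\theta$ to be a polynomial in $\nu$ and thus to commute with it. The main obstacle is to guarantee that this $\theta$ is realized as the linear part of a \emph{genuine} graded algebra automorphism of $A$, so that the potential-twist is a bona fide Zhang twist and the equivalence $\grmod A\cong\grmod A^{\theta}$ is actually retained; I would settle this either by producing the cube root inside the linear-algebraic group $\Aut_{\mathrm{gr}}(A)$ (using again that $k$ is algebraically closed of characteristic $0$) or by invoking the more general twisting-system form of Zhang's theorem, which applies to arbitrary invertible linear substitutions of the potential. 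Since such a twist only replaces $\sigma$ by $\sigma\tau$ with $\tau\in\Aut_{k}(\PP^{2},E)$, the point scheme $E$ and the norm $\|\sigma\|$ are unchanged, in agreement with Proposition \ref{lem_Mo2}; confirming this algebra-automorphism compatibility uniformly, rather than separately across the several point-scheme types, is the technical heart of the argument.
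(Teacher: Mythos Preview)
The paper does not prove this lemma; it is quoted verbatim from \cite[Theorem 4.4]{IMa2} and used as a black box. So there is no in-paper proof to compare against. What can be compared is your proposal versus the argument in \cite{IMa2}: there the result is obtained by the explicit classification of $3$-dimensional geometric AS-regular algebras up to graded Morita equivalence, producing for each point-scheme type a concrete Calabi--Yau representative (the tables referenced in Remark~\ref{rem-TypeS'}); the graded Morita equivalences are checked type by type via explicit twists. Your route is genuinely different: a uniform argument that takes a cube root $\theta$ of the Nakayama automorphism inside $\mathrm{GL}(A_1)$ and twists the superpotential by $\id\otimes\theta\otimes\theta^{2}$ to force cyclic invariance.

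Your outline is sound, and the one point you flag --- whether $\theta$ lifts to an honest graded algebra automorphism so that Zhang's theorem applies --- is exactly the crux. It is resolvable without falling back on case analysis: for quadratic (more generally $m$-Koszul) derivation-quotient algebras, the twist of the potential by $\id\otimes\theta\otimes\cdots\otimes\theta^{m-1}$ for \emph{any} $\theta\in\mathrm{GL}(V)$ yields an algebra graded Morita equivalent to the original, because such a $\theta$ determines a Zhang twisting system on $T(V)$ that descends to the quotient; this is the content of the Mori--Smith analysis of superpotential twists, and your remark about invoking the twisting-system form of Zhang's theorem is precisely the right move. What your approach buys is conceptual uniformity and an obvious extension to dimension $d$ (replace cube root by $d$-th root); what the \cite{IMa2} approach buys, and what the present paper actually exploits in Sections~3--4, is the explicit defining relations of the Calabi--Yau model $A'$ for each type, which your abstract argument does not directly supply.
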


The following results are main theorems in \cite{IMo}. 

\begin{lem}[{\cite[Theorem 3.4]{IMo}}]
\label{q-main}
If $A'=\cA(E, \s')$ is a $3$-dimensional Calabi-Yau quantum polynomial algebra, then $||\s'||=|{\s'}^3|$, so the following are equivalent:
\begin{enumerate}[{\rm (1)}]
\item{} $|\s'|<\infty$. 
\item{} $||\s'||<\infty$. 
\item{} $A'$ is finite over its center. 
\item{} $\Projn A'$ is finite over its center. 
\end{enumerate}
\end{lem}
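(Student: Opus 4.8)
The plan is to derive all four equivalences from the single numerical identity $\|\s'\|=|{\s'}^3|$, and then to concentrate the real effort on that identity. Granting the identity, the equivalences close up quickly: $(1)\Rightarrow(2)$ is the trivial bound $\|\s'\|\le|\s'|$, while $(2)\Rightarrow(1)$ holds because $\|\s'\|<\infty$ forces $|{\s'}^3|<\infty$ by the identity, and ${\s'}^3$ of finite order forces $\s'$ of finite order. The equivalence $(1)\Leftrightarrow(3)$ is exactly Theorem \ref{thm_{ATV2}}, and $(3)\Rightarrow(4)$ is immediate from Definition \ref{def_{Z(proj)}} applied to $A'$ itself. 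Finally, for $(4)\Rightarrow(2)$: if $\Projn A'\cong\Projn B$ with $B=\cA(E_B,\s_B)$ finite over its center, then Proposition \ref{lem_Mo2} gives $\|\s'\|=\|\s_B\|\le|\s_B|$, and $|\s_B|<\infty$ by Theorem \ref{thm_{ATV2}}; so the cycle $(3)\Rightarrow(4)\Rightarrow(2)\Rightarrow(1)\Rightarrow(3)$ together with $(1)\Leftrightarrow(2)$ finishes the equivalences.

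For the identity itself I would first convert the norm into a statement about the polarization $\mathcal{L}:=\cO_E(1)$. Assuming for now that $E\subset\bbP$ is a genuine cubic divisor (the degenerate case $E=\bbP$ is deferred), $E$ is embedded by the complete linear system $|\mathcal{L}|$, with $h^0(\mathcal{L})=3$, so that an automorphism $\tau\in\Aut_k E$ lies in $\Aut_k(\bbP,E)$ if and only if $\tau^{*}\mathcal{L}\cong\mathcal{L}$: any linear extension restricts $\cO_{\bbP}(1)$ to $\mathcal{L}$ and hence preserves it, and conversely $\tau^{*}\mathcal{L}\cong\mathcal{L}$ makes $\tau$ act linearly on $\PP(H^0(\mathcal{L})^{*})=\bbP$ compatibly with the embedding. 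Thus
\[
\|\s'\|=\min\{\,i>0 \mid ({\s'}^{i})^{*}\mathcal{L}\cong\mathcal{L}\,\}.
\]

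Next I would compute this minimum using the Calabi-Yau hypothesis. The key structural input is that trivial Nakayama automorphism forces $\s'$ to be translation-like, that is, to act trivially on $\Pic^{0}E$ and, on the smooth locus $E_{\mathrm{reg}}$ (a commutative algebraic group), to be a translation $t_{\tau}$ by some $\tau\in E_{\mathrm{reg}}$. Granting this, the assignment $i\mapsto\xi_i:=({\s'}^{i})^{*}\mathcal{L}\otimes\mathcal{L}^{-1}\in\Pic^{0}E$ becomes a group homomorphism $\ZZ\to\Pic^{0}E$ (the cocycle terms $({\s'}^{i})^{*}\xi_j$ collapse precisely because $\s'$ acts trivially on $\Pic^{0}E$), so $\xi_i=i\xi$ with $\xi=\xi_1$, and therefore $\|\s'\|=\mathrm{ord}(\xi)$. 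Finally, the standard identity that the polarization map $\tau\mapsto t_{\tau}^{*}\mathcal{L}\otimes\mathcal{L}^{-1}$ on a genus-one curve is multiplication by $\deg\mathcal{L}$ gives $\xi\leftrightarrow -3\tau$ (the factor $3=\deg\mathcal{L}$ is exactly where the cube enters), whence $\mathrm{ord}(\xi)=\mathrm{ord}(3\tau)=|t_{3\tau}|=|{\s'}^{3}|$, which is the desired identity.

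I expect the main obstacle to be the structural input of the previous paragraph, namely establishing that the Calabi-Yau condition really does make $\s'$ translation-like, uniformly across the singular and reducible point schemes (Types S, S$'$, T, T$'$, NC, CC, WL, TL), where $\Pic^{0}E$ is no longer an abelian variety but a generalized Jacobian ($\mathbb{G}_m$, $\mathbb{G}_a$, or an extension/product matching the components) and ``translation'' must be read off on $E_{\mathrm{reg}}$. Re-verifying the homomorphism property and the multiplication-by-$\deg\mathcal{L}$ computation inside these degenerate Picard groups, and handling the degenerate case $E=\bbP$ (Type P, where $\Pic^{0}E=0$ forces $\|\s'\|=1$, so one must instead extract ${\s'}^{3}=\id$ directly from the classification) separately, is the delicate part; this is exactly where I would lean on the explicit description of Calabi-Yau geometric pairs underlying Lemma \ref{IMa-Main2}. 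This geometric route is consistent with specializing the general Nakayama formula $\|\s\|=|\nu^{*}\s^{3}|$ to $\nu=\id$, but arguing directly avoids the circularity of invoking that more general statement.
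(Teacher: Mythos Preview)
The paper does not prove this lemma at all: it is quoted verbatim from \cite[Theorem 3.4]{IMo} and used as input. The only glimpse the present paper gives of how \cite{IMo} actually argues is the fragment inside the proof of Proposition~\ref{prop-S'}, where for Type~S$'$ the automorphism $\s'$ is written out explicitly on each component of $E$ and one checks by hand that $(\s')^i\in\Aut_k(\PP^2,E)$ if and only if $\alpha^{3i}=1$, yielding $\|\s'\|=|\alpha^3|=|{\s'}^3|$. Combined with the reliance throughout on the normal-form tables of \cite{IMa1,IMa2}, this indicates that the source proof is a type-by-type computation from the explicit Calabi-Yau classification, not a uniform geometric argument.

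Your deduction of the four equivalences from the identity $\|\s'\|=|{\s'}^3|$ is correct (in $(4)\Rightarrow(2)$ you are tacitly using the ``in particular'' clause of Proposition~\ref{lem_Mo2}, which already absorbs Lemma~\ref{lem.AOU}; that is fine). Your attack on the identity itself---rewriting $\|\s'\|$ as the order of ${\s'}^{*}\mathcal L\otimes\mathcal L^{-1}$ in $\Pic^0 E$ and then invoking a degree-$3$ polarization formula---is more conceptual than the apparent case-by-case approach and makes the exponent $3=\deg\cO_E(1)$ visible. However, the step ``Calabi-Yau forces $\s'$ to act trivially on $\Pic^0 E$ and to be translation on $E_{\mathrm{reg}}$'' is exactly the statement that, for the singular and reducible point schemes, one verifies by inspecting the explicit normal forms; you acknowledge this yourself. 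So your framework organizes the computation cleanly but does not bypass the classification that \cite{IMo} presumably carries out directly.
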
 
%
\begin{lem}[{\cite[Theorem 3.8]{IMo}}]
\label{q.nu}
If $A=\cA(E, \s)$ is a $3$-dimensional quantum polynomial algebra with the Nakayama automorphism $\nu\in \Aut A$, then $||\s||=|\nu^*\s^3|$, so the following are equivalent:
\begin{enumerate}[{\rm (1)}]
\item{} $|\nu^*\s^3|<\infty$. 
\item{} $||\s||<\infty$. 
\item{} $\Projn A$ is finite over its center. 
\end{enumerate}  
Moreover, if $A$ is of Type T, T', CC, TL, WL, then $A$ is never finite over its center. 
\end{lem}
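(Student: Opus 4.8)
The entire statement rests on the single geometric identity $\|\s\|=|\nu^*\s^3|$; granting it, the list of equivalences is formal. My plan is to reduce the general case to the Calabi--Yau case of Lemma~\ref{q-main} by means of a Zhang twist, tracking how the Nakayama automorphism is absorbed into the cube of the point-scheme automorphism.

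First I would invoke Lemma~\ref{IMa-Main2}, which by construction produces the Calabi--Yau algebra $A'=\cA(E,\s')$ as a graded Zhang twist $A'=A^{\phi}$ for some graded algebra automorphism $\phi$, with $\grmod A\cong\grmod A'$. Proposition~\ref{lem_Mo2}(2) then gives $\|\s\|=\|\s'\|$, while Lemma~\ref{q-main} gives $\|\s'\|=|\s'^{3}|$. Thus the identity reduces to the single claim $|\s'^{3}|=|\nu^*\s^3|$.

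This is the crux, and it is where the twist computation lives. With the standard convention $\mathcal P(A^{\phi})=(E,\bar\phi\,\s)$, where $\bar\phi\in\Aut_k(\PP^2,E)$ is the linear automorphism induced by $\phi$ on $A_1$ (note that for the same reason $\nu^*$ is itself linear, being induced by the graded automorphism $\nu$), I would show that the Calabi--Yau normalization $\nu_{A'}=\id$ is equivalent to the relation $\bar\phi^{3}=\nu^*$. This is exactly where the exponent $3$ enters, reflecting simultaneously $\operatorname{gldim}A=3$ and $\deg\cO_E(1)=3$ (so that $\omega_E\cong\cO_E$ and the degree-$3$ polarization contributes the triple shift in the dualizing data that the Nakayama automorphism records). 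Reading the action of $\nu$ off $A_1$ shows that $\nu^*$ commutes with $\s$; a short computation with $\s'=\bar\phi\,\s$ then shows that $\s'^{3}$ and $\nu^*\s^3$ are conjugate in $\Aut_kE$, hence of equal order, which yields $|\s'^{3}|=|\nu^*\s^3|$ and completes the proof of the identity.

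Granting $\|\s\|=|\nu^*\s^3|$, the equivalences follow at once: (1)$\Leftrightarrow$(2) is immediate, and for (2)$\Leftrightarrow$(3) I would chain $\|\s\|<\infty\Leftrightarrow\|\s'\|<\infty$ (equal values) $\Leftrightarrow\Projn A'$ finite over its center (Lemma~\ref{q-main}) $\Leftrightarrow\Projn A$ finite over its center, the last step using $\Projn A\cong\Projn A'$ together with Definition~\ref{def_{Z(proj)}}. For the final assertion I would argue type by type: for Types T, T', CC, TL, WL the classification of the pairs $(E,\s)$ shows that the normal form of $\s$ always carries a nontrivial $\mathbb G_a$-component---a translation on the additive smooth locus of a cuspidal cubic, or a unipotent jet-action along a multiple or concurrent line---so $|\s|=\infty$ in characteristic $0$, and Theorem~\ref{thm_{ATV2}} then shows $A$ is never finite over its center. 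The main obstacle is the twist computation of the third paragraph: pinning down the relation $\bar\phi^{3}=\nu^*$, verifying that $\nu^*$ commutes with $\s$, and tracking the conjugacy so that $|\s'^{3}|$ and $|\nu^*\s^3|$ genuinely coincide; the remaining steps are formal bookkeeping or a finite check against the classification of point schemes.
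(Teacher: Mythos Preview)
This lemma is quoted from \cite[Theorem 3.8]{IMo} and carries no proof in the present paper, so there is no in-paper argument to compare against directly. Judging from the repeated appeals later in the paper to explicit tables from \cite{IMo} (e.g.\ ``Table~1 in Lemma~3.2'' inside the proof of Proposition~\ref{prop-S'}), the argument in \cite{IMo} proceeds by a type-by-type computation of $\sigma$, $\nu^*$, and $\|\sigma\|$ from the classified defining relations, not by a uniform structural argument.

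Your route is genuinely different in spirit and more conceptual, and the deduction of the three equivalences from the identity $\|\sigma\|=|\nu^*\sigma^3|$ is correct. The difficulty is in the third paragraph, where several steps rely on input not supplied by the statements you cite. First, Lemma~\ref{IMa-Main2} only asserts the \emph{existence} of a Calabi--Yau $A'$ with $\grmod A\cong\grmod A'$; it does not say that $A'$ arises as a Zhang twist $A^{\phi}$ by a single graded automorphism, and Zhang's theorem in general yields only a twisting system. You would need either the explicit construction in \cite{IMa2} or a separate reduction to justify this. Second, the key relation $\bar\phi^{3}=\nu^*$ presupposes the transformation law for the Nakayama automorphism under twisting, which is nowhere recorded in this paper. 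Third, once $\phi\in\Aut A$ is a graded algebra automorphism, the induced $\bar\phi$ commutes with $\sigma$ for the same reason $\nu^*$ does, so in fact $(\bar\phi\sigma)^3=\bar\phi^{3}\sigma^{3}$ outright, not merely up to conjugacy; your ``conjugate'' is weaker than what the setup actually delivers. Your handling of the final sentence on Types T, T$'$, CC, TL, WL is correct in outline---the normal forms of $\sigma$ in those types carry a nontrivial unipotent part, hence $|\sigma|=\infty$ in characteristic~$0$, and Theorem~\ref{thm_{ATV2}} applies---but again this is ultimately a case check against the classification, which is how \cite{IMo} proceeds.
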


\begin{lem}[{\cite[Theorem 1.3]{IMo}}]
\label{thm.main-IMo}
Let $A=\cA(E, \s)$ be a $3$-dimensional quantum polynomial algebra such that $E\neq \PP^2$, and $\nu\in \Aut A$  the Nakayama automorphism of $A$. 
Then $||\s||=|\nu^*\s^3|$, so 
the following are equivalent:
\begin{enumerate}[{\rm (1)}]
\item{} $|\nu^*\s^3|<\infty$. 
\item{} $\|\s\|<\infty$. 
\item{} $\Projn A$ is finite over its center.
\item{} $\Projn A$ has a fat piont. 
\end{enumerate} 
\end{lem}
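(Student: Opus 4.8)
The plan is to take Lemma \ref{q.nu} as the starting point: it already supplies the norm identity $\|\s\|=|\nu^*\s^3|$ together with the equivalences (1) $\iff$ (2) $\iff$ (3) for an arbitrary $3$-dimensional quantum polynomial algebra. So the identity and the chain (1)--(3) are granted, and the genuinely new content to prove is the fat-point criterion (4); the hypothesis $E\neq\PP^2$ is essential here, since the commutative polynomial ring (Type P, $\s=\id$) is finite over its center yet has no fat point. The remaining task is therefore (3) $\iff$ (4), and I would first reduce it to the Calabi-Yau case. By Lemma \ref{IMa-Main2} there is a $3$-dimensional Calabi-Yau quantum polynomial algebra $A'=\cA(E',\s')$ with $\Projn A\cong\Projn A'$; condition (3) is invariant under this equivalence by Definition \ref{def_{Z(proj)}}, condition (4) is invariant because fat point modules are preserved under graded Morita equivalence (the Remark following Lemma \ref{lem.AOU}), and by Proposition \ref{lem_Mo2} one has $E'\cong E$, so $E'\neq\PP^2$. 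Since $\nu=\id$ in the Calabi-Yau case, Lemma \ref{q-main} identifies (3) for $A'$ with ``$A'$ is finite over its center,'' i.e. with $|\s'|<\infty$. Thus it suffices to prove, for a Calabi-Yau $A=\cA(E,\s)$ with $E\neq\PP^2$, that $A$ is finite over its center if and only if $\Projn A$ has a fat point.

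For the direction (3) $\Rightarrow$ (4), suppose $A$ is finite over its center. Then $A$ is a prime noetherian PI algebra, module-finite over its graded center $Z$. Because $E\neq\PP^2$, the algebra $A$ is not the commutative polynomial ring and hence is noncommutative, so its PI degree $n$ satisfies $n>1$. The Azumaya locus is a nonempty dense open subset $U$ of the projective surface $\Proj Z$, over which the associated sheaf of algebras is Azumaya of rank $n^2$; the simple module at a closed point of $U$ then has $k$-dimension $n>1$ and yields a fat point module of $\Projn A$ of multiplicity $n$. This produces the required fat point.

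For (4) $\Rightarrow$ (3), I would argue the contrapositive: assume $A$ is not finite over its center, equivalently $|\s|=\infty$ by Lemma \ref{q-main}, and show $\Projn A$ has no fat point. Since $E\neq\PP^2$, there is a normal element $g\in A_3$ with $A/gA\cong B:=B(E,\cO_E(1),\s)$ a twisted homogeneous coordinate ring, and by the Artin--Van den Bergh theorem $\Projn B\cong\mathsf{coh}\,E$. Given a fat point module $M$, the point $\pi M$ is simple in $\tails A$, and exactly one of two cases occurs: either $\pi M$ is $g$-torsion, whence it descends to a point of $\Projn B\cong\mathsf{coh}\,E$ and is a skyscraper sheaf of multiplicity $1$, contradicting fatness; or $g$ acts invertibly, whence $\pi M$ corresponds to a finite-dimensional simple module over the degree-zero part $(A[g^{-1}])_0$ of the Ore localization at the powers of $g$. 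The crux is to rule out the second case when $|\s|=\infty$: one must identify $(A[g^{-1}])_0$ with a crossed-product-type algebra built from $E$ and $\s$ and show it is a simple ring with no nonzero finite-dimensional modules precisely because $\s$ has infinite order. This simplicity analysis—verifying the absence of proper $\s$-invariant ideals, which is exactly where the infinitude of $|\s|$ enters—is the step I expect to be the main obstacle; by comparison, the reduction to the Calabi-Yau case and the Azumaya-locus construction in (3) $\Rightarrow$ (4) are routine given the cited lemmas and the standard PI theory of $3$-dimensional AS-regular algebras finite over their centers.
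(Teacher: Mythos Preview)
The paper does not prove this lemma; it is stated without proof and cited verbatim from \cite[Theorem~1.3]{IMo} as one of several background results imported for later use. Consequently there is no argument in this paper against which your proposal can be compared.

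For what it is worth, your plan is a reasonable outline of how such a result is typically established: you correctly take the equivalences (1)--(3) and the identity $\|\s\|=|\nu^*\s^3|$ as given by Lemma~\ref{q.nu}, reduce the remaining fat-point criterion to the Calabi-Yau case via Lemma~\ref{IMa-Main2} and Proposition~\ref{lem_Mo2}, and then split on $g$-torsion versus $g$-torsionfree. You also correctly identify the genuine content as the simplicity (or absence of finite-dimensional simples) of $(A[g^{-1}])_0$ when $|\s|=\infty$, and you explicitly leave that step open. So your proposal is an honest sketch with an acknowledged gap, not a complete proof---but since the present paper supplies no proof of its own, there is nothing further to compare.
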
 

Note that if $E=\mathbb P^2$, then $||\sigma||=1$, but ${\rm Proj_{nc}}A$ has no fat point. 
(see \cite[Lemma 2.14]{IMo}). 
%

\begin{lem}[{\cite[Corollary 4.2]{IMo}}]
\label{cor.main-IMo} 
Let $A=\cA(E, \s)$ be a $3$-dimensional quantum polynomial algebra with the Nakayama automorphism $\nu\in \Aut A$.  Then the following are equivalent:
\begin{enumerate}[{\rm (1)}]
\item{} $|\nu^*\s^3|=1$ or $\infty$. 
\item{} $\Projn A$ has no fat point.
\item{} The isomorphism classes of simple $2$-regular modules 
over the Beilinson algbera $\nabla A$ of $A$ are parameterized by the set of closed points of $E\subset \PP^2$.  
\end{enumerate}
In particular, if $A$ is of Type P, T, T', CC, TL, WL, then $A$ satisfies all of the above conditions. 
\end{lem}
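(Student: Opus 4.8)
The plan is to deduce the three equivalences from the results already recorded---chiefly Lemma~\ref{q.nu}, Lemma~\ref{thm.main-IMo}, and the identity $\|\s\|=|\nu^*\s^3|$---after isolating one structural fact about the norm and invoking the Beilinson correspondence for simple regular modules.

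The fact I would establish first is that $\|\s\|=1$ if and only if $E=\PP^2$. The implication ($\Leftarrow$) is immediate, since every automorphism of $\PP^2$ is linear and hence $\s\in\Aut_k(\PP^2,\PP^2)$. For ($\Rightarrow$), suppose $\s=\phi|_E$ with $\phi\in\Aut_k\PP^2$ and compare $A$ with the Zhang twist $S^{\phi}$ of the commutative polynomial ring $S=k[x,y,z]$, whose point scheme is $\PP^2$ with automorphism $\phi$. Both algebras have a $3$-dimensional space of quadratic relations, because $H_A(t)=(1-t)^{-3}$; and since $\phi|_E=\s$, every relation of $S^{\phi}$ (which vanishes on the graph of $\phi$ over all of $\PP^2$) vanishes on the graph of $\s$ over $E$, so it is a relation of $A$. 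By the dimension count the two relation spaces coincide, whence $A=S^{\phi}$, so $\mathcal P(A)=\mathcal P(S^{\phi})=(\PP^2,\phi)$ and $E=\PP^2$.

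Granting this, (1)$\iff$(2) splits into two cases. If $E=\PP^2$, then $|\nu^*\s^3|=\|\s\|=1$ by Lemma~\ref{q.nu}, and $A\cong S^{\phi}$ gives $\Projn A\cong\PP^2$, which has no fat point; so (1) and (2) both hold. If $E\neq\PP^2$, the structural fact forces $|\nu^*\s^3|=\|\s\|\ge 2$ whenever it is finite, so condition~(1) is equivalent to $|\nu^*\s^3|=\infty$; by Lemma~\ref{thm.main-IMo} this is in turn equivalent to $\Projn A$ being not finite over its center and to the absence of fat points, which is exactly~(2).

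For (2)$\iff$(3) I would use that $\nabla A$ is the Beilinson algebra of $A$, so that under the resulting comparison of module theories the simple $2$-regular $\nabla A$-modules correspond to the point objects of $\Projn A$: the thin ones are parameterized by the point scheme $E$, and any remaining ones arise from fat point modules. Hence the simple $2$-regular modules are parameterized by $E$ precisely when $\Projn A$ has no fat point, which is~(2). The hard part of the whole argument is exactly this step---pinning down that ``simple $2$-regular module'' matches ``point object'' and that fat point modules account for all the extra simples---and it is where the Herschend--Iyama--Oppermann framework and the Minamoto--Mori theory are needed. Finally, the ``In particular'' clause is immediate: for Type P one has $E=\PP^2$, handled above; and for Types T, T', CC, TL, WL Lemma~\ref{q.nu} says $A$ is never finite over its center, so $|\nu^*\s^3|=\infty$ and (1) holds.
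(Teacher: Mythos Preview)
This lemma is quoted in the paper from \cite[Corollary 4.2]{IMo} without proof, so there is no argument here to compare your attempt against; it functions purely as a cited preliminary result.

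On the substance of your sketch: the reduction of (1)$\Leftrightarrow$(2) via the dichotomy $E=\PP^2$ versus $E\neq\PP^2$, together with your argument that $\|\s\|=1$ forces $E=\PP^2$ (comparing relation spaces with the Zhang twist $S^{\phi}$), is correct and clean. Two points deserve tightening. First, in the ``In particular'' clause you infer $|\nu^*\s^3|=\infty$ from the statement in Lemma~\ref{q.nu} that \emph{$A$ itself} is never finite over its center for Types T, T', CC, TL, WL; but the equivalence in Lemma~\ref{q.nu} links $|\nu^*\s^3|<\infty$ to $\Projn A$ being finite over its center, not to $A$. You need the extra step (via Proposition~\ref{lem_Mo2}) that the type is preserved under graded Morita equivalence, so that no $A'$ with $\Projn A\cong\Projn A'$ can be finite over its center either. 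Second, and more seriously, your (2)$\Leftrightarrow$(3) is an assertion rather than an argument. The direction (1)$\Rightarrow$(3) is available in this paper as Lemma~\ref{Mo2-thm4.8}, but the converse---that parametrization by $E$ rules out fat points---requires the precise dictionary between points of $\Projn A$ (including fat ones) and simple $2$-regular $\nabla A$-modules, and in particular that every fat point yields a simple regular module not accounted for by $E$. You flag this as the hard part, which is honest, but as written the proof is incomplete at exactly that step.
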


The Beilinson algebra is a typical example of $(d-1)$-representation infinite algebra by Minamoto-Mori \cite[Theorem 4.12]{MM}. 
Here, for a $d$-dimensional quantum polynomial algebra $A$, 
we define {\it the Beilinson algebra of $A$} by 
$$\nabla A:=\begin{pmatrix} A_0 & A_1 & \cdots & A_{d-1} \\
0 & A_0 & \cdots & A_{d-2} \\
\vdots & \ddots & \vdots & \vdots \\
0 & 0 & \cdots & A_0 \end{pmatrix}.$$

\begin{defin}[{\cite[Definition 6.10]{HIO}, cf. \cite[Definition 4.9]{Mo2}}]
\label{def-d-rep-tame}
We say that a $d$-representation infinite algebra $R$ is {\it $d$-representation tame} 
if the preprojective algebra $\Pi (R)$ of $R$ is right noetherian and  finite over its center. 
\end{defin}

\begin{rem}
\label{rem-IMo1}
We recall from \cite[Remark 4.4]{IMo}. 
For a 3-dimensional quantum polynomial algebra $A$, we expect that the following are equivalent: 
\begin{enumerate}
\item{} $\Projn A$ is finite over its center. 
\item{} The Beilinson algebra $\nabla A$ of $A$ is $2$-representation tame. 
\item{} The isomorphism classes of simple $2$-regular modules over $\nabla A$ are parameterized by $\PP^2$.
\end{enumerate}
In fact, these equivalences are shown for Type S  
in Theorem \ref{Mo2-thm4.17} and Theorem \ref{Mo2-thm4.21} as describe below.  
Our aim of this paper is to prove these equivalences for Type S' (see Theorem \ref{Main-I} in Section 4). 
\end{rem}

\begin{thm}[{\cite[Theorem 4.17]{Mo2}}]
\label{Mo2-thm4.17}
Let $A=\cA(E,\si)$ be a $3$-dimensional quantum polynomial algebra of Type S. 
Then the following are equivalent: 
\begin{enumerate}[{\rm (1)}]
\item $\|\si\|<\infty$. 
\item $\Projn A$ is finite over its center. 
\item The Beilinson algebra $\nabla A$ of $A$ is $2$-representation tame. 
\end{enumerate}
\end{thm}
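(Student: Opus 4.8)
The equivalence (1) $\Leftrightarrow$ (2) is essentially free and type-independent: applying Lemma \ref{q.nu} to the Type S algebra $A=\cA(E,\sigma)$ gives $\|\sigma\|=|\nu^*\sigma^3|$ together with the equivalence of ``$\|\sigma\|<\infty$'' and ``$\Projn A$ is finite over its center.'' So the genuinely new content over Lemma \ref{q.nu} is condition (3), and I would prove it by establishing (1) $\Leftrightarrow$ (3) (equivalently (2) $\Leftrightarrow$ (3)), threading the $2$-representation tameness of $\nabla A$ through the quantity $|\nu^*\sigma^3|$.

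Unwinding Definition \ref{def-d-rep-tame}, condition (3) asserts that the preprojective algebra $\Pi(\nabla A)$ is right noetherian and finite over its center. The plan is to identify $\Pi(\nabla A)$ explicitly. By Minamoto--Mori \cite{MM} the Beilinson algebra $\nabla A$ of a $3$-dimensional quantum polynomial algebra is $2$-representation infinite, and by the Herschend--Iyama--Oppermann correspondence \cite{HIO} its preprojective algebra $\Pi(\nabla A)$ is a graded $3$-Calabi--Yau algebra of Gorenstein parameter $1$. I would then show that, for Type S, $\Pi(\nabla A)$ is graded Morita equivalent to the $3$-dimensional Calabi--Yau quantum polynomial algebra $\cA(E,\nu^*\sigma^3)$, i.e.\ it carries the same triangle point scheme $E$ and its point-scheme automorphism is exactly $\nu^*\sigma^3$. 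Because finiteness over the center and right noetherianity are preserved under graded Morita equivalence, and a quantum polynomial algebra is right noetherian by definition, this identification disposes of the noetherian requirement and reduces center-finiteness of $\Pi(\nabla A)$ to that of $\cA(E,\nu^*\sigma^3)$.

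Granting the identification, Lemma \ref{q-main} applied to the Calabi--Yau algebra $B=\cA(E,\nu^*\sigma^3)$ shows $B$ is finite over its center if and only if $|\nu^*\sigma^3|<\infty$. Combined with $\|\sigma\|=|\nu^*\sigma^3|$ from Lemma \ref{q.nu}, this closes the loop: $\nabla A$ is $2$-representation tame $\Leftrightarrow$ $\Pi(\nabla A)$ is finite over its center $\Leftrightarrow$ $|\nu^*\sigma^3|<\infty$ $\Leftrightarrow$ $\|\sigma\|<\infty$ $\Leftrightarrow$ $\Projn A$ is finite over its center. To carry out the identification concretely I would take the explicit normal form of a Type S algebra $A=\cA(E,\sigma)$, write $\nabla A$ as the path algebra of the three-vertex Beilinson quiver modulo the relations induced by the quadratic relations of $A$, compute $\Pi(\nabla A)$, and match its generators and relations against the defining data of $\cA(E,\nu^*\sigma^3)$.

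The main obstacle is precisely this matching: one must pin down that the twist introduced by the Nakayama automorphism $\nu$ composes with the cube arising from the period-$3$ structure of the three-vertex quiver to produce exactly the automorphism $\nu^*\sigma^3$ on $E$. This is where the Type S normal form is indispensable, since the concrete quadratic relations are needed both to read off $\nu$ and to compute the preprojective relations. A natural simplification, which I would use to lighten the bookkeeping, is to first reduce to the Calabi--Yau case via Lemma \ref{IMa-Main2}: replacing $A$ by a graded Morita equivalent Calabi--Yau algebra $A'=\cA(E,\sigma')$ preserves $\Projn A\cong\Projn A'$ and, by Proposition \ref{lem_Mo2}, gives $\|\sigma\|=\|\sigma'\|$; since $\nu=\id$ in that case the target automorphism collapses to $\sigma'^3$ and the computation of $\Pi(\nabla A')$ becomes markedly more transparent, provided one checks that $2$-representation tameness is transported along the relevant equivalence.
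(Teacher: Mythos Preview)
Your (1)\,$\Leftrightarrow$\,(2) via Lemma~\ref{q.nu} matches what the paper does. The problem is entirely in your route to (3).

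Two claims in your proposal do not hold. First, ``finiteness over the center is preserved under graded Morita equivalence'' is false: a Zhang twist $A^{\tau}$ satisfies $\grmod A\cong\grmod A^{\tau}$, yet for commutative $A$ and generic $\tau$ the twist has small center and is not module-finite over it. This is exactly why Definition~\ref{def_{Z(proj)}} is needed in the first place. Second, the asserted graded Morita equivalence $\Pi(\nabla A)\simeq\cA(E,\nu^{*}\sigma^{3})$ is not correct. After reducing to the Calabi--Yau model $A'=\cA(E,\sigma')$, what one actually has (and what the paper uses, via \cite[Lemma~4.16(4), Theorem~1.6(2)]{Mo2}) is $\Pi(\nabla A)\cong\Pi(\nabla A')\cong (A')^{[3]}$, the third quasi-Veronese. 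The quasi-Veronese is graded Morita equivalent to $A'=\cA(E,\sigma')$ itself, not to $\cA(E,\sigma'^{3})$; indeed $\|\sigma'\|=|\sigma'^{3}|$ while $\|\sigma'^{3}\|=|\sigma'^{9}|$, and these differ in general, so Proposition~\ref{lem_Mo2}(2) rules out the equivalence you want. Hence your proposed reduction to Lemma~\ref{q-main} applied to $\cA(E,\nu^{*}\sigma^{3})$ does not go through.

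The argument in \cite{Mo2} (mirrored in this paper's Type~S$'$ proof, Theorem~\ref{thm1}) instead works directly with $(A')^{[3]}$ and an \emph{explicit} computation of $Z(A')$. For (1)\,$\Rightarrow$\,(3): when $\|\sigma\|<\infty$ one has $|\sigma'|<\infty$, so $A'$ is finite over $Z(A')$; the center is computed to be a finitely generated commutative algebra (Example~\ref{ex-TypeS} for Type~S, Proposition~\ref{prop-S'} for Type~S$'$), hence noetherian, and then Lemma~\ref{Mo2-prop4.11} transfers module-finiteness to $(A')^{[3]}$. For (3)\,$\Rightarrow$\,(1): when $\|\sigma\|=\infty$ the explicit computation gives $Z(A')=k[g]$ with $g\in A'_{3}$; Lemma~\ref{Mo2-lem4.10} then yields $Z\big((A')^{[3]}\big)\cong Z(A')^{(3)}=k[w]$, which has GK-dimension $1$, whereas $(A')^{[3]}$ has GK-dimension $3$, so $(A')^{[3]}$ cannot be finite over its center. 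The explicit center calculation is thus the load-bearing ingredient you are missing; there is no shortcut through an abstract Morita transfer.
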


\begin{thm}[{\cite[Theorem 4.21]{Mo2}}]
\label{Mo2-thm4.21}
Let $A=\cA(E,\si)$ be a $3$-dimensional quantum polynomial algebra of Type S. 
\begin{enumerate}[{\rm (1)}]
\item If the Beilinson algebra $\nabla A$ of $A$ is not $2$-representation tame, 
then the isomorphism classes of simple $2$-regular modules over $\nabla A$ 
are parametrized by the set of points of $E\subsetneq \PP^{2}$. 
\item If the Beilinson algebra $\nabla A$ of $A$ is $2$-representation tame, 
then the isomorphism classes of simple $2$-regular modules over $\nabla A$ 
are parametrized by the set of points of $\PP^{2}$. 
\end{enumerate}
\end{thm}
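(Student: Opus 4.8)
The plan is to split along the tame/not-tame dichotomy supplied by Theorem \ref{Mo2-thm4.17} and to read off each case from the fat-point analysis of \cite{IMo}. Throughout I use the guiding principle that the isomorphism classes of simple $2$-regular modules over $\nabla A$ are in natural bijection with the ``points'' of $\Projn A$, where such a point is either an ordinary point module (these are parametrized by the point scheme $E$) or a fat point module. For part (1), assume $\nabla A$ is not $2$-representation tame. By Theorem \ref{Mo2-thm4.17} this means $\Projn A$ is not finite over its center, whence $|\nu^*\si^3|=\|\si\|=\infty$ by Lemma \ref{q.nu}. In particular the hypothesis ``$|\nu^*\si^3|=1$ or $\infty$'' of Lemma \ref{cor.main-IMo} holds, so that lemma gives at once that the simple $2$-regular modules over $\nabla A$ are parametrized by the closed points of $E$; since $A$ is of Type S, $E$ is a triangle and hence $E\subsetneq\PP^2$, which is exactly the assertion.

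For part (2), assume $\nabla A$ is $2$-representation tame, so $\Projn A$ is finite over its center and $|\nu^*\si^3|=\|\si\|<\infty$. The first task is to rule out $|\nu^*\si^3|=1$: by Lemma \ref{cor.main-IMo} this value would force $\Projn A$ to have no fat point and the simple $2$-regular modules to be parametrized by the triangle $E\neq\PP^2$, contradicting the desired conclusion. I would establish $|\nu^*\si^3|\geq 2$ from the explicit normal form of Type S algebras, computing the actions of $\nu^*$ and $\si$ on the triangle and checking that $\nu^*\si^3=\id$ can hold only when $E=\PP^2$, i.e. only in Type P. With $2\leq|\nu^*\si^3|<\infty$ in hand, Lemma \ref{cor.main-IMo} guarantees that $\Projn A$ genuinely possesses fat points. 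By Lemma \ref{IMa-Main2} I then replace $A$ by a $3$-dimensional Calabi-Yau model $A'$ with $\grmod A\cong\grmod A'$, so that $\tails A\cong\tails A'$ and the simple $2$-regular modules over $\nabla A$ and $\nabla A'$ coincide; by Lemma \ref{q-main} the algebra $A'=\cA(E,\si')$ is finite over its center with $|\si'|<\infty$.

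It remains to parametrize all points of $\Projn A'$ by $\PP^2$, for which I pass to the center $Z:=Z(A')$. For a Type S algebra finite over its center one computes, as in the center calculation underlying \cite{Mo2} and parallel to the Type S' computation carried out in this paper, that $Z$ is a commutative graded ring with $\Proj Z\cong\PP^2$, over which $A'$ is a finite module and an order. The standard Azumaya/ramification decomposition of $\Proj Z$ then applies: over the Azumaya locus $A'$ is an Azumaya algebra whose simple modules are fat points, while over the ramification locus the simple modules are one-dimensional, i.e. ordinary point modules, and this ramification locus is precisely the image of the triangle $E$. Consequently each closed point of $\Proj Z\cong\PP^2$ supports exactly one simple object of $\tails A'$ (a fat point off $E$, an ordinary point on $E$), which yields the sought bijection between simple $2$-regular $\nabla A$-modules and the points of $\PP^2$.

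The main obstacle is this last step for part (2): verifying $\Proj Z\cong\PP^2$ together with the clean Azumaya/ramification dichotomy whose ramification locus is exactly $E$. This is the computational heart, resting on the explicit Type S normal form and the precise structure of $Z(A')$; in particular one must check that no fat point is lost or counted twice, so that the fat points fill exactly $\PP^2\setminus E$ and recombine with the point modules over $E$ to cover all of $\PP^2$ bijectively.
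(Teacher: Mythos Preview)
Note first that this statement is quoted from \cite{Mo2} and the present paper gives no proof of it; the natural benchmark is the paper's proof of the parallel Type S' statement, Theorem \ref{thm2}, which reproduces the method of \cite{Mo2}. For part (1) your argument is correct and matches that method: the paper combines Theorem \ref{thm1} (your Theorem \ref{Mo2-thm4.17}) with Lemma \ref{Mo2-thm4.8}, which is exactly your appeal to Lemma \ref{cor.main-IMo} in the case $\|\si\|=\infty$.

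For part (2) the routes genuinely diverge. The paper does \emph{not} pass through $\Proj Z(A')$ and Azumaya/ramification theory. Instead it exploits that $x\in A'_1$ is a regular normal element and invokes Lemma \ref{Mo2-thm4.20} to obtain
\[
|\Projn A'| \;=\; |\Projn A'/(x)|\ \sqcup\ |\Specn A'[x^{-1}]_0|.
\]
The closed piece $A'/(x)\cong k\langle y,z\rangle/(yz-\alpha zy)$ is a $2$-dimensional AS-regular algebra, so its noncommutative Proj has point set $|\PP^1|$; the open piece $A'[x^{-1}]_0$ is (after a change of variable) a quantized Weyl algebra at a root of unity, whose finite-dimensional irreducibles are classified explicitly via \cite{HW,DGO} and shown to be parametrized by $|\mathbb{A}^2|$. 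Gluing yields $|\PP^1|\sqcup|\mathbb{A}^2|=|\PP^2|$. This is entirely concrete and sidesteps the obstacle you yourself flag: in your approach one must first prove $\Proj Z(A')\cong\PP^2$, which is not immediate since $Z(A')=k[x^{|\alpha|},y^{|\alpha|},z^{|\alpha|},g]$ is generated in mixed degrees $|\alpha|,|\alpha|,|\alpha|,3$ subject to a relation, and one must then verify that each closed point of $\Proj Z(A')$ supports exactly one simple object of $\tails A'$ with no loss or multiplicity along the ramification locus. Your outline is plausible but leaves precisely this substantive step open; the normal-element decomposition avoids it altogether.
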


To prove our main result, we need the following lemmas in \cite{Mo2}: 

\begin{lem}[{\cite[Theorem 4.8]{Mo2}}]
\label{Mo2-thm4.8}
Let $A=\cA(E,\si)$ be a $3$-dimensional quantum polynomial algebra. 
If $\|\si\|=1$ or $\|\si\|=\infty$, then 
the isomorphism classes of simple $2$-regular modules over $\nabla A$ 
are parametrized by the set of points of $E$.
\end{lem}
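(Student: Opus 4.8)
The plan is to obtain the statement by combining the numerical identity of Lemma~\ref{q.nu} with the module-theoretic dichotomy already recorded in Lemma~\ref{cor.main-IMo}. The point is that the hypothesis $\|\sigma\| \in \{1,\infty\}$ is exactly the condition under which the simple $2$-regular modules of $\nabla A$ cannot detect anything strictly larger than $E$.

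First I would rewrite the hypothesis. By Lemma~\ref{q.nu} one has $\|\sigma\| = |\nu^*\sigma^3|$, so $\|\sigma\| \in \{1,\infty\}$ is precisely condition (1) of Lemma~\ref{cor.main-IMo}, namely $|\nu^*\sigma^3| = 1$ or $\infty$. Then I would invoke the equivalence (1) $\Leftrightarrow$ (3) of that lemma: under $|\nu^*\sigma^3| \in \{1,\infty\}$ the isomorphism classes of simple $2$-regular $\nabla A$-modules are parametrized by the closed points of $E \subset \PP^2$, which is exactly the asserted conclusion. I would, however, flag that this route is only legitimate if Lemma~\ref{cor.main-IMo} is not itself deduced from the present statement; if it is, the following independent argument is needed.

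For the independent route I would unwind what ``simple $2$-regular module over $\nabla A$'' means through the Minamoto--Mori/Herschend--Iyama--Oppermann correspondence \cite{MM,HIO}: the simple $2$-regular $\nabla A$-modules are identified with the point objects, ordinary and fat, of $\Projn \Pi(\nabla A)$ attached to the $3$-preprojective algebra $\Pi(\nabla A)$. The key structural input is that $\Pi(\nabla A)$ is again a $3$-dimensional AS-regular algebra whose geometric pair has point scheme $E$ and automorphism $\nu^*\sigma^3$; this is the mechanism by which the Nakayama twist and the exponent $3$ enter. Granting this, the ordinary point objects are always parametrized by $E$, while by the Artin--Tate--Van den Bergh analysis \cite{ATV2} applied to $\Pi(\nabla A)$ fat point objects occur precisely when $\nu^*\sigma^3$ has finite order strictly greater than $1$, i.e. when $1 < |\nu^*\sigma^3| < \infty$. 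The hypothesis $|\nu^*\sigma^3| \in \{1,\infty\}$ excludes exactly this range, so no fat points survive and the simple $2$-regular modules are parametrized by $E$ alone.

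The main obstacle is the structural input of the previous paragraph: making the bijection between simple $2$-regular modules of the Beilinson algebra and the points of $\Projn \Pi(\nabla A)$ precise, and correctly identifying the geometric pair of $\Pi(\nabla A)$ as $(E,\nu^*\sigma^3)$ --- in particular tracking the Nakayama automorphism, since it is $\nu^*\sigma^3$ rather than $\sigma$ or $\sigma^3$ that governs the appearance of fat points. Once this is in place, the order dichotomy for $\nu^*\sigma^3$ does all the remaining work, and the boundary case $|\nu^*\sigma^3| = 1$ (which forces $E = \PP^2$) is consistent, since there ``parametrized by $E$'' reads as ``parametrized by $\PP^2$''.
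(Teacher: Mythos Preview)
The paper does not supply its own proof of this lemma: it is quoted verbatim as \cite[Theorem~4.8]{Mo2} and used as a black-box input to later arguments. So there is no in-paper proof to compare your proposal against.

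That said, your first route is indeed circular, as you suspected. Lemma~\ref{cor.main-IMo} is \cite[Corollary~4.2]{IMo}, which postdates \cite{Mo2}; its implication $(1)\Rightarrow(3)$ is precisely the present lemma combined with the identity $\|\sigma\|=|\nu^*\sigma^3|$ from Lemma~\ref{q.nu}, and the proof in \cite{IMo} relies on \cite[Theorem~4.8]{Mo2}. So you cannot derive Lemma~\ref{Mo2-thm4.8} from Lemma~\ref{cor.main-IMo} without begging the question.

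Your second route has the right shape but contains a genuine imprecision. The preprojective algebra $\Pi(\nabla A)$ is not a $3$-dimensional quantum polynomial algebra in the sense used here: it is not connected graded over $k$ (its degree-zero piece is $k^3$), so one cannot literally attach to it a geometric pair $(E,\nu^*\sigma^3)$ and invoke \cite{ATV2} directly. What one actually has, via \cite[Theorem~1.6]{Mo2} and the Calabi-Yau reduction of Lemma~\ref{IMa-Main2}, is $\Pi(\nabla A)\cong (A')^{[3]}$ for a Calabi-Yau $A'=\cA(E,\sigma')$ with $\grmod A\cong\grmod A'$; the relevant automorphism is then $(\sigma')^3$, and one recovers $|\nu^*\sigma^3|$ only through $|(\sigma')^3|=\|\sigma'\|=\|\sigma\|=|\nu^*\sigma^3|$. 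The bijection between simple $2$-regular $\nabla A$-modules and points (ordinary and fat) of $\Projn A$ is set up in \cite{Mo2} via \cite{MM}, and the absence of fat points under $\|\sigma\|\in\{1,\infty\}$ is what does the work. Your sketch captures this mechanism, but the sentence asserting that $\Pi(\nabla A)$ is itself AS-regular with geometric pair $(E,\nu^*\sigma^3)$ would need to be replaced by this more careful chain.
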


Let $A$ be a graded algebra and $r\in \mathbb{N}^{+}$. 
We recall that \textit{the $r$th Veronese algebra of $A$} 
is a graded algebra defined by $A^{(r)}:=\bigoplus_{i\in \mathbb{Z}}A_{ri}$. 
Moreover, in \cite[Definition 3.7]{Mo3}, \textit{the $r$th quasi-Veronese algebra of $A$} 
is a graded algebra defined by 
$$
A^{[r]}=
\begin{pmatrix}
A^{(r)} & A(1)^{(r)} & \cdots & A(r-1)^{(r)} \\
A(-1)^{(r)} & A^{(r)} & \cdots & A(r-2)^{(r)} \\
\vdots & \vdots &\ddots &\vdots \\
A(-r+1)^{(r)} & A(-r+2)^{(r)} & \cdots & A^{(r)}
\end{pmatrix}
$$
with the multiplivcation $(a_{ij})(b_{ij}):=(\sum_{k-0}^{r-1}a_{kj}b_{ik})$. 
\begin{lem}[{\cite[Lemma 4.10]{Mo2}}]
\label{Mo2-lem4.10}
For a graded algebra $A$ and $r\in \mathbb{N}^{+}$, 
$Z(A^{[r]})\cong Z(A)^{(r)}$ as graded algebra. 
\end{lem}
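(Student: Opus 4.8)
The plan is to compute $Z(A^{[r]})$ directly from the matrix presentation and then recover $Z(A)^{(r)}$ by reading off a single diagonal entry. Index rows and columns of $A^{[r]}$ by $0,\dots,r-1$, so the $(i,j)$-entry lies in $A(j-i)^{(r)}$, and write $E_{ij}(u)$ for the elementary matrix whose only nonzero entry is $u\in A(j-i)^{(r)}$ in position $(i,j)$. These elements span $A^{[r]}$, and the diagonal idempotents $e_i:=E_{ii}(1)\in (A^{[r]})_0$ are orthogonal with $\sum_{i}e_i=1$. First I would observe that a central element $z$ commutes with each $e_i$; computing $z e_i$ and $e_i z$ from the given multiplication rule shows that $z e_i$ is supported on the $i$-th row while $e_i z$ is supported on the $i$-th column, so $z e_i=e_i z$ forces every off-diagonal entry of $z$ to vanish. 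Hence $z=\operatorname{diag}(w_0,\dots,w_{r-1})$ with each $w_i\in A^{(r)}$.

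Next I would impose that $z$ commute with the off-diagonal generators $E_{pq}(u)$. A short computation turns this into the family of linking relations $w_p u = u\,w_q$ for all $p,q$ and all homogeneous $u\in A(q-p)^{(r)}$. Since $A$ is generated in degree $1$, it suffices to record these for $u\in A_1$, where they read $w_p u = u\,w_{(p+1)\bmod r}$. Telescoping along a product $u_1\cdots u_d$ of degree-$1$ elements gives $w_p a = a\,w_{(p+d)\bmod r}$ for every $a\in A_d$; choosing $d\equiv 0 \pmod r$ shows in particular that each $w_p$ is central in the Veronese $A^{(r)}$.

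The crux of the argument is to upgrade these cyclic relations to the conclusion that all the $w_p$ coincide with a single element $c$ and that $c$ is central in the whole of $A$, not merely in $A^{(r)}$. This is the step I expect to be the main obstacle: the relations $w_p u = u\,w_{(p+1)\bmod r}$ only link consecutive diagonal entries through left and right multiplication by $A_1$, and converting them into the equalities $w_0=\dots=w_{r-1}$ together with $w_0 u = u\,w_0$ for all $u\in A_1$ is precisely where the hypotheses on $A$ (connected, finitely generated in degree $1$) must be exploited. Once this is carried out one sets $c:=w_0$, which then lies in $Z(A)\cap A^{(r)}=Z(A)^{(r)}$.

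Finally I would verify that $z\mapsto c$ is the desired graded algebra isomorphism. It is a ring homomorphism because, under the given multiplication, $\operatorname{diag}(c)\operatorname{diag}(c')=\operatorname{diag}(cc')$; it is graded because the degree-$n$ part of $Z(A^{[r]})$ consists of the matrices $\operatorname{diag}(c,\dots,c)$ with $c\in A_{rn}$ central, which maps onto $Z(A)_{rn}=(Z(A)^{(r)})_n$; and it is bijective, with inverse $c\mapsto \operatorname{diag}(c,\dots,c)$, the latter being central by the (immediate) converse direction of the linking relations. This yields $Z(A^{[r]})\cong Z(A)^{(r)}$ as graded algebras.
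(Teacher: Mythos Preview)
The paper does not supply a proof of this lemma; it is quoted from \cite[Lemma~4.10]{Mo2} and used as a black box, so there is nothing in the present paper to compare your argument against.

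As for your attempt: you have correctly isolated the crux, but the step you flag as ``the main obstacle'' is not merely a technical gap---in the generality stated it is actually \emph{false}. Take $A=k\langle x,y\rangle/(xy+yx)$ (a connected graded $k$-algebra generated in degree~$1$, so within the paper's standing conventions) and $r=2$. The element $xy$ anti-commutes with every $u\in A_1$, so with $w_0=-\tfrac12\,xy$ and $w_1=\tfrac12\,xy$ one checks directly that $z=\operatorname{diag}(w_0,w_1)$ commutes with the diagonal idempotents and with every $E_{pq}(u)$ for $u\in A_1$; since these generate $A^{[2]}$, the element $z$ is central. Yet $w_0\neq w_1$ and neither lies in $Z(A)=k[x^2,y^2]$. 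In fact $\dim_k Z(A^{[2]})_n=2n+1$ while $\dim_k Z(A)^{(2)}_n=\dim_k Z(A)_{2n}=n+1$, so no graded algebra isomorphism $Z(A^{[2]})\cong Z(A)^{(2)}$ exists for this $A$.

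Hence your proposed map $z\mapsto w_0$ cannot be completed into a proof of the statement as written: the linking relations $w_{p}u=uw_{(p+1)\bmod r}$ simply do not force $w_0=\cdots=w_{r-1}$, even for connected algebras generated in degree~$1$. Either Mori's original lemma carries an additional hypothesis that the present paper has suppressed (one that excludes such skew-central elements), or the intended isomorphism is of a different nature than the diagonal embedding you are constructing. For the paper's application only the case of a $3$-dimensional quantum polynomial algebra of Type~S$'$ with $r=3$ is needed, so a more restricted statement may well suffice there; but the general claim you set out to prove is not true.
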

\begin{lem}[{\cite[Proposition 4.11]{Mo2}}]
\label{Mo2-prop4.11}
Let $A$ be a right noetherian graded algebra and $r\in \mathbb{N}^{+}$. 
If $Z(A)$ is noetherian and $A$ is finite over its center $Z(A)$, 
then $A^{[r]}$ is right noetherian and finite over its center $Z(A^{[r]})$.  
\end{lem}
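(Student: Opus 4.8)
\emph{Plan.} The statement is purely ring-theoretic, so the strategy is to reduce everything to module-finiteness over a single central, commutative, noetherian subalgebra. By Lemma \ref{Mo2-lem4.10} we have $Z(A^{[r]})\cong Z(A)^{(r)}$, and I will regard $Z(A)^{(r)}$ as sitting inside $A^{[r]}$ as the central ``scalar'' diagonal. Thus it suffices to prove two things: (i) $Z(A)^{(r)}$ is noetherian, and (ii) $A^{[r]}$ is finitely generated as a module over $Z(A)^{(r)}$. Indeed, once (ii) holds, the ring $A^{[r]}$ is finite over its center $Z(A^{[r]})\cong Z(A)^{(r)}$; and since every right ideal of $A^{[r]}$ is automatically a $Z(A)^{(r)}$-submodule by centrality, (i) together with (ii) forces the ascending chain condition, so $A^{[r]}$ is right noetherian. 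Hence the real work is concentrated in the commutative and module-theoretic claims (i) and (ii).

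For (i) and the Veronese bookkeeping, I would first record that $Z(A)$ is a connected $\mathbb{N}$-graded commutative noetherian $k$-algebra, hence finitely generated over $k$; writing homogeneous algebra generators $z_1,\dots,z_m$ of degrees $d_1,\dots,d_m$, each $z_t^{r}$ lies in $Z(A)^{(r)}$, so $z_t$ is integral over $Z(A)^{(r)}$. Therefore $Z(A)$ is a finite $Z(A)^{(r)}$-module, and by the Eakin--Nagata theorem $Z(A)^{(r)}$ is noetherian, which is (i). Combining this with the hypothesis that $A$ is a finite $Z(A)$-module, I obtain that $A$ is a finite $Z(A)^{(r)}$-module; fixing homogeneous generators makes $A$ a noetherian graded $Z(A)^{(r)}$-module. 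Consequently each twisted Veronese piece $A(l)^{(r)}=\bigoplus_{n}A_{rn+l}$, being a graded $Z(A)^{(r)}$-submodule of $A$, is itself a finite $Z(A)^{(r)}$-module.

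It then remains to assemble the matrix. Reading off the definition, the $(i,j)$-entry of $A^{[r]}$ is $A(j-i)^{(r)}$, so as a module over the central diagonal copy of $Z(A)^{(r)}$ we have $A^{[r]}=\bigoplus_{i,j=1}^{r}A(j-i)^{(r)}$, a finite direct sum of the finite $Z(A)^{(r)}$-modules produced in the previous step; hence $A^{[r]}$ is a finite $Z(A)^{(r)}$-module, establishing (ii) and completing the argument. The main obstacle I anticipate is the careful verification in this last step that the module structure used to sum the entries is exactly the one induced by the central embedding furnished by Lemma \ref{Mo2-lem4.10} --- that is, that the diagonal $Z(A)^{(r)}$ truly acts centrally and that the matrix multiplication $(a_{ij})(b_{ij})=(\sum_{k}a_{kj}b_{ik})$ respects both the grading and this $Z(A)^{(r)}$-action, so that module-finiteness of the individual entries genuinely assembles to module-finiteness of the algebra $A^{[r]}$.
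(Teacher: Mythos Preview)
The paper does not supply its own proof of this lemma; it is quoted verbatim from \cite[Proposition~4.11]{Mo2} and used as a black box. So there is no in-paper argument to compare against.

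Your argument is correct and is the standard one. The only step worth a remark is the passage ``connected $\mathbb{N}$-graded commutative noetherian $k$-algebra, hence finitely generated over $k$'': this is the usual graded argument (the irrelevant ideal $Z(A)_{+}$ is finitely generated as an ideal, and homogeneous ideal generators then generate $Z(A)$ as a $k$-algebra by induction on degree), so it is fine. The compatibility you flag at the end --- that the diagonal copy of $Z(A)^{(r)}$ really is the center furnished by Lemma~\ref{Mo2-lem4.10} and acts entrywise --- is immediate once one notes that for central $c\in Z(A)^{(r)}$ and any $(b_{ij})\in A^{[r]}$ the product with $\mathrm{diag}(c,\dots,c)$ on either side returns $(cb_{ij})=(b_{ij}c)$ under the multiplication rule $(a_{ij})(b_{ij})=(\sum_k a_{kj}b_{ik})$; so the entrywise $Z(A)^{(r)}$-module structures do assemble to the ring-theoretic one, and module-finiteness of each $A(j-i)^{(r)}$ over $Z(A)^{(r)}$ gives module-finiteness of $A^{[r]}$ as you claim.
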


If $A$ is a commutative $\NN$-graded algebra and $u\in A$ is a homogeneous element of 
positive degree, then  $\Proj\,A \cong \Proj\,A/(u)\,\bigsqcup\,\Spec\, A[u^{-1}]_{0}$. 
A similar decomposition is known to hold for the noncommutative version. 

\begin{lem}[{\cite[Theorem 4.20]{Mo2}}]
\label{Mo2-thm4.20}
Let $A$ be a right noetherian connected graded algebra satisfying {\rm (H)} in \cite[Definition 3.7]{Mo2}, $u\in A$ a homogeneous regular normalizing element of positive degree 
{\rm (}i.e. $uA=Au${\rm )}, 
$f:\,A\rightarrow A/(u)$ the natural surjection, and $g:\,A\rightarrow A[u^{-1}]$ the natural embedding. 
If $A[u^{-1}]$ is strongly graded, then the functors
\begin{align*}
&\Tails\,A/(u) \rightarrow \Tails\, A;\,\,\pi_{A/(u)}M\mapsto \pi_{A}f_{\ast}M, \\
&\Mod\,A[u^{-1}]_{0}\rightarrow \Tails\,A;\,N\mapsto \pi_{A}g_{\ast}(N\otimes_{A[u^{-1}]_{0}}A[u^{-1}])
\end{align*}
give a bijection from $|\Projn\, A/(u)|\,\bigsqcup\,\Specn\, A[u^{-1}]_{0}$ to 
$|\Projn\,A|$. 
Moreover, in this correspondence, the fat points correspond to fat points. 
\end{lem}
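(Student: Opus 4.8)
The plan is to read the two functors as the noncommutative \emph{closed immersion} of the vanishing locus of $u$ and the \emph{open immersion} of its complement, mirroring the commutative decomposition $\Proj A\cong \Proj A/(u)\,\bigsqcup\,\Spec A[u^{-1}]_0$. Writing $e:=\deg u>0$, I would first record that, since $u$ is regular and normalizing, $\{u^n\}_{n\ge 0}$ is a two-sided Ore set, so $g\colon A\to A[u^{-1}]$ is a flat graded localization and the functor $(-)[u^{-1}]=-\otimes_A A[u^{-1}]\colon \grmod A\to \grmod A[u^{-1}]$ is exact with kernel the $u$-torsion modules (those $M$ with $M[u^{-1}]=0$). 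As finite-dimensional modules are $u$-torsion (because $e>0$), this functor factors through $\tails A$, and condition (H) is precisely what should guarantee that the induced functor identifies $\grmod A[u^{-1}]$ with the Serre quotient of $\tails A$ by its subcategory $\tails_u A$ of $u$-torsion objects. The first reduction is thus to translate the claim into statements about this localization and about the simple objects on either side of it.

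The conceptual core will be a dichotomy for a point, i.e.\ a simple object $\pi_A M\in \tails A$ of finite multiplicity. Using $uA=Au$, right multiplication by $u$ is an $A$-module map $r_u\colon M\to M^{\phi}(e)$ twisted by the normalizing automorphism $\phi$ determined by $u$; since twist and shift are autoequivalences of $\tails A$, the target is again simple, so the kernel of $\pi_A(r_u)$ is a subobject of the simple object $\pi_A M$ and hence is either $0$ or everything. If it is everything then $\pi_A(r_u)=0$, so $Mu$ is finite-dimensional and $M[u^{-1}]=0$; if it is $0$ then $r_u$ is injective in $\tails A$ and $M[u^{-1}]\neq 0$. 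These alternatives are mutually exclusive and exhaust $|\Projn A|$, which already produces the disjoint decomposition into the two prescribed images.

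On the closed side I would argue as follows: if $M[u^{-1}]=0$ then by the dichotomy $Mu$ is finite-dimensional, so $M':=M/Mu$ is a graded $A/(u)$-module with $\pi_A f_\ast M'\cong \pi_A M$ and $\dim_k M'_i=\dim_k M_i$ for $i\gg 0$; conversely $f_\ast$ carries an $A/(u)$-module with simple image to one over $A$ with simple image and leaves the graded dimension sequence unchanged. Hence $\pi_{A/(u)}M'\mapsto \pi_A f_\ast M'$ is a well-defined injection of $|\Projn A/(u)|$ onto exactly the $u$-torsion points of $|\Projn A|$, and since the multiplicity $\lim_{i\to\infty}\dim_k$ is unchanged by $f_\ast$, fat points go to fat points. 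On the open side I would invoke that $A[u^{-1}]$ is strongly graded: by Dade's theorem $(-)_0\colon \grmod A[u^{-1}]\to \mod A[u^{-1}]_0$ is an equivalence with quasi-inverse $-\otimes_{A[u^{-1}]_0}A[u^{-1}]$, and each $A[u^{-1}]_i$ is an invertible $A[u^{-1}]_0$-bimodule. For a point $N$ of $\Specn A[u^{-1}]_0$, the module $L:=N\otimes_{A[u^{-1}]_0}A[u^{-1}]$ is simple in $\grmod A[u^{-1}]$ and invertibility gives $\dim_k L_i=\dim_k N$ for all $i$; as $u$ acts bijectively on $g_\ast L$, no nonzero subobject or quotient of $\pi_A g_\ast L$ is $u$-torsion, so simplicity of $L$ forces $\pi_A g_\ast L$ simple, with $\lim_{i\to\infty}\dim_k(g_\ast L)_i=\dim_k N<\infty$; the assignment $\pi_A M\mapsto (M[u^{-1}])_0$ is its inverse. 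This identifies $\Specn A[u^{-1}]_0$ with precisely the non-$u$-torsion points, with multiplicity $\dim_k N$, so fat corresponds to fat here as well.

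Combining the three cases yields the bijection $|\Projn A/(u)|\,\sqcup\,\Specn A[u^{-1}]_0\to |\Projn A|$ preserving fat points. The step I expect to be most delicate is the first one: pinning down exactly how condition (H) together with the strong-grading hypothesis makes the localization an honest Serre quotient of $\tails A$, so that simple objects and their multiplicities transfer in the way used above, while keeping the normalizing twist $\phi$ under control throughout the multiplicity bookkeeping.
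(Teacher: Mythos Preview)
The paper does not contain a proof of this lemma: it is quoted verbatim as \cite[Theorem~4.20]{Mo2} and used as a black box in the proof of Theorem~\ref{thm2}. So there is no ``paper's own proof'' to compare your proposal against.

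That said, your outline is the natural one and is essentially the strategy behind the original result in \cite{Mo2}: split $|\Projn A|$ into $u$-torsion and non-$u$-torsion simples via the kernel-or-zero dichotomy for $\pi_A(r_u)$, identify the $u$-torsion side with $|\Projn A/(u)|$ through $f_\ast$, and identify the non-$u$-torsion side with $|\Specn A[u^{-1}]_0|$ via Dade's equivalence for strongly graded rings. Your multiplicity bookkeeping (so that fat points match fat points) is also correct in spirit. The one place where your sketch is genuinely soft is exactly where you flag it: you invoke condition~(H) only informally, as ``what should guarantee'' that $\grmod A[u^{-1}]$ is the Serre quotient of $\tails A$ by the $u$-torsion subcategory. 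In \cite{Mo2} this is made precise, and without it the passage from ``$\pi_A g_\ast L$ has no $u$-torsion sub or quotient'' to ``$\pi_A g_\ast L$ is simple'' (and the inverse map $\pi_A M\mapsto (M[u^{-1}])_0$ landing on a \emph{simple} $A[u^{-1}]_0$-module) is not fully justified. If you want a self-contained argument, that is the step to write out carefully.
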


\section{Centers of $3$-dimensional Calabi-Yau quantum polynomial algebras for Type S'}

%
%
In this section, to prove our main theorem in this paper, 
we will calculate the center of a $3$-dimensional Calabi-Yau quantum polynomial algebra for Type S'. 
In order to calculate this, 
we recall the result foy Type S by \cite{Mo1}. 

%
\begin{exa}
\label{ex-TypeS}
Let $A'=\mathcal{A}(E,\sigma')=k\langle x,y,z\rangle/(g_{1},g_{2},g_{3})$
be a $3$-dimensional Calabi-Yau quantum polynomial algebra of Type S, 
where 
$\left\{
\begin{array}{ll}
g_{1}=yz-\alpha zy,\\
g_{2}=zx-\alpha xz,\\
g_{3}=xy-\alpha yx\quad (\alpha^{3}\neq 0,1).
\end{array}
\right.
$
Then $g:=xyz\in Z(A')_{3}$. 
If $A'$ is finite over its center $Z(A')$, 
then $Z(A')=k[x^{n},y^{n},z^{n},g]$ by \cite[Theorem 3.3]{Mo1} where $n=|\sigma|$. 
On the other hand, 
if $A'$ is not finite over its center $Z(A')$, 
then $Z(A')=k[g]$ by (the proof of) \cite[Theorem 3.3]{Mo1}. 
\end{exa}

%

\begin{prop}
\label{prop-S'}
Let $A'=\mathcal{A}(E,\sigma')=k\langle x,y,z\rangle/(g_{1},g_{2},g_{3})$
be a $3$-dimansional Calabi-Yau quantum polynomial algebra of Type S', 
where 
$\left\{
\begin{array}{ll}
g_{1}=yz-\alpha zy+x^{2},\\
g_{2}=zx-\alpha xz,\\
g_{3}=xy-\alpha yx\quad (\alpha^{3}\neq 0,1).
\end{array}
\right.
$
Define $g:=xyz + (1-\alpha^3)^{-1} x^3 \in {A'}_{3}$. 

\begin{enumerate}[{\rm (1)}]
\item
If $A'$ is finite over its center $Z(A')$
{\rm (}that is, $|\alpha|$ is finite{\rm )}, 
then there exists a basis $x$, $y$, $z$ for ${A'}_{1}$ such that 
$
Z(A')=
k[x^{|\alpha|},y^{|\alpha|},z^{|\alpha|},g] 
$. 

\item
If $A'$ is not finite over its center $Z(A')$
{\rm (}that is, $|\alpha|$ is infinite{\rm )}, 
then 
$Z(A')=[g]$.
\end{enumerate}
\end{prop}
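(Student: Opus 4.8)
The plan is to compute the center directly from the defining relations, using the explicit presentation $A'=k\langle x,y,z\rangle/(g_1,g_2,g_3)$ and mirroring the strategy that works for Type S in Example \ref{ex-TypeS}. The first task is to verify that the proposed element $g=xyz+(1-\alpha^3)^{-1}x^3$ is genuinely central. I would check $gx=xg$, $gy=yg$, $gz=zg$ by repeatedly applying the relations $yz=\alpha zy-x^2$, $zx=\alpha xz$, $xy=\alpha yx$ to commute a generator past the cubic monomials; the role of the correction term $(1-\alpha^3)^{-1}x^3$ is precisely to cancel the error terms that arise because $g_1$ (unlike in Type S) carries an extra $x^2$. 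This is a routine but delicate normal-ordering computation, and the constant $(1-\alpha^3)^{-1}$ (well-defined since $\alpha^3\neq 1$) is forced by this cancellation.

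Next I would establish a convenient linear basis for $A'$. Since $A'$ is a $3$-dimensional quantum polynomial algebra, its Hilbert series is $(1-t)^{-3}$, and the relations allow one to rewrite any monomial into a normal form, say ordered monomials $x^iy^jz^\ell$; I would fix this PBW-type basis and express multiplication in these coordinates. With a basis in hand, determining the center reduces to a linear-algebra condition: an element $w$ is central iff $[w,x]=[w,y]=[w,z]=0$, and each commutator can be read off degree by degree on the normal-form basis.

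For part (2), the infinite-order case, the expectation is that the only central elements are polynomials in $g$, so $Z(A')=k[g]$; here I note the typo in the statement, which should read $Z(A')=k[g]$. The argument is that when $|\alpha|=\infty$, the scalar relations produced by commuting a homogeneous element past $x,y,z$ force all coefficients to vanish except those supported on powers of $g$, exactly as in the Type S computation invoking \cite[Theorem 3.3]{Mo1}. For part (1), when $n:=|\alpha|<\infty$, the new central elements $x^n,y^n,z^n$ appear because $\alpha^n=1$ makes the commutation scalars trivial; I would show these together with $g$ generate the center and that no smaller power works. The phrase ``there exists a basis $x,y,z$'' signals that one may need a linear change of the degree-one generators (absorbing the $x^2$ term in $g_1$) to bring the algebra into a form where the Type S argument applies verbatim.

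The main obstacle will be the centrality verification and the completeness argument in part (1): unlike Type S, the relation $g_1=yz-\alpha zy+x^2$ is not ``diagonal,'' so commuting past monomials produces cross terms involving $x^2$ and $x^3$, and one must check that these assemble into the correction term in $g$ and do not secretly create additional central elements. Concretely, the hard part is proving the upper bound $Z(A')\subseteq k[x^n,y^n,z^n,g]$ — that is, that the candidate list is exhaustive — which I would handle by analyzing the commutator conditions on an arbitrary homogeneous element in the normal-form basis and showing the surviving monomials are exactly the ones built from $x^n,y^n,z^n,g$. Comparing Hilbert series of the subalgebra $k[x^n,y^n,z^n,g]$ against the known center (or against $Z(A')^{(r)}$ via Lemma \ref{Mo2-lem4.10}) would give a clean dimension count to confirm exhaustiveness.
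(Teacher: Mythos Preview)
Your overall framework — fix the PBW basis $\{x^iy^jz^k\}$, verify $g$ is central, and then impose the three commutator conditions on an arbitrary homogeneous $X=\sum\lambda_{i,j,k}x^iy^jz^k$ — is exactly what the paper does. Where you diverge is in the ``hard part,'' the upper bound $Z(A')\subseteq k[x^n,y^n,z^n,g]$, and both of your concrete suggestions for it have problems.

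First, the idea of a linear change of basis in $A'_1$ to ``absorb the $x^2$ term in $g_1$'' and reduce to Type S does not work: no degree-one substitution removes $x^2$ from $yz-\alpha zy+x^2$ while keeping $g_2,g_3$ in their diagonal form (try $y\mapsto y+ax$ or $z\mapsto z+bx$ and you only produce $xz$- or $xy$-terms, never an $x^2$ cancellation). The clause ``there exists a basis $x,y,z$'' in the statement is not signalling such a reduction; the paper works with the given presentation throughout. Second, the Hilbert-series comparison is not developed: you would need the Hilbert series of $Z(A')$ from an independent source, and Lemma \ref{Mo2-lem4.10} only relates $Z(A^{[r]})$ to $Z(A)^{(r)}$, which presupposes knowing $Z(A)$.

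What the paper actually does for exhaustiveness is more hands-on. From $xX=Xx$ one gets $\lambda_{i,j,k}=\alpha^{k-j}\lambda_{i,j,k}$, and from $zX=Xz$ and $yX=Xy$ one derives a single recursion
\[
(1-\alpha^{i-j})\lambda_{i,j,k}=\alpha^{i-3j-3}\Bigl(\textstyle\sum_{\nu=0}^{j}\alpha^{3\nu}\Bigr)\lambda_{i-2,j+1,k+1}.
\]
Separately the paper expands $g^n=\sum_j\kappa^n_{3n-2j,j,j}x^{3n-2j}y^jz^j$ and proves by induction that the $\kappa^n$ satisfy the \emph{same} recursion. This lets one peel off from $X$, by induction on degree and on the number of nonzero coefficients, a term equal to a scalar multiple of $g^h x^{i_0-3h}y^{j_0}z^{k_0}$ with the residual exponents all divisible by $|\alpha|$. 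A genuine subtlety you do not anticipate: the argument must split into the cases $3\nmid|\alpha|$ and $3\mid|\alpha|$. In the latter, $|\alpha|=3|\alpha^3|$ and the intermediate element $x^{|\alpha^3|}y^{|\alpha^3|}z^{|\alpha^3|}$ is central; one first shows it lies in $k[x^{|\alpha|},g]$ via an explicit formula for $g^{|\alpha^3|}$, and the inductive peeling uses it as an auxiliary generator. Without this case split the recursion analysis does not close.
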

\begin{proof}
First, we recall the followings from \cite[Table 1 in Lemma 3.2]{IMo}; 
\begin{itemize}
\item 
$
E=\mathcal{V}(x)\cup \mathcal{V}(x^2-\lambda yz)
\left(\lambda:=\dfrac{\alpha^{3}-1}{\alpha}\right)$,\, 
\item 
$\left\{
\begin{array}{ll}
\sigma'(0,b,c)=(0,b, \alpha c), \\
\sigma'(a,b,c)=(a,\alpha b,\alpha^{-1} c).\\
\end{array}
\right.$
\end{itemize}
Since 
$$
\begin{cases}
(\sigma')^i(0,b,c)=(0,b, \alpha^i c), \\
(\sigma')^i(a,b,c)=(a,\alpha^i b,\alpha^{-i} c)=(\alpha^{-i}a, b, \alpha^{-2i}c),\\
\end{cases}
$$
$(\sigma')^i\in {\rm Aut}_k(\mathbb{P}^2, E)$ if and only if 
$\alpha^{3i}=1$, so $||\sigma'||=|\alpha^3|=|(\sigma')^3|$ 
(\cite[proof of Theorem 3.3]{IMo}). 
Therefore, by Lemma \label{q.main}, 
$||\sigma'||=|(\sigma')^3|=|\alpha^3|<\infty$ if and only if $A'$ is finite over its center 
if and only if $\mathsf{Proj}_{{\rm nc}}\,A'$ is finite over its center. 

We will find generators of the center of $A'$. 
By induction on non-negative integers $i$, $j$ and $k$, 
the following equations are hold; 
\begin{itemize}
\item
$y x^i = \alpha^{-i} x^i y$, $y^j x = \alpha^{-j} x y^j$,
\item
$z x^i = \alpha^{i} x^i z$, $z^k x = \alpha^k x z^k$,

\item
$z y^j = \alpha^{-j} y^j z + 
\displaystyle \alpha^{-3j+2} \left( \sum_{\nu=0}^{j-1} \alpha^{3\nu} \right) 
x^2 y^{j-1}$,
where we define 
$\displaystyle \sum_{\nu=0}^{j-1} \alpha^{3\nu}  = 0$
when $j=0$,

\item
$z^k y = \alpha^{-k} y z^k + 
\displaystyle \alpha^{-k} \left( \sum_{\nu=0}^{k-1} \alpha^{3\nu} \right) 
x^2 z^{k-1}$. 
\end{itemize}
So, if $|\alpha|$ is finite, then
$\displaystyle \sum_{\nu=0}^{|\alpha|-1} \alpha^{3\nu} =0$, 
hence 
$x^{|\alpha|}$, 
$y^{|\alpha|}$ and $z^{|\alpha|}$
are elements of the center of $A'$.
By calculation, we see that, 
whether $|\alpha|$ is finite or not, 
$g:=xyz + (1-\alpha^3)^{-1} x^3$ is also an element of the center of $A'$. 

Suppose that 
$X=\displaystyle \sum_{i+j+k=d} \lambda_{i,j,k} x^i y^j z^k$
is an element of the center of $A'$.
Since
\[
\displaystyle \sum_{i+j+k=d} \lambda_{i,j,k} x^{i+1} y^j z^k 
= x X = X x = \displaystyle \sum_{i+j+k=d} 
\alpha^{k-j} \lambda_{i,j,k} x^{i+1} y^j z^k ,
\]
$
\lambda_{i,j,k} = 
\alpha^{k-j} \lambda_{i,j,k}$
holds for each $i$, $j$ and $k$. 
So it follows that 
\begin{description}
\item[(i)]
if $|\alpha|$ is finite and $\lambda_{i,j,k}\neq 0$, 
then 
$k-j$ is divided by $|\alpha|$, 
and if $|\alpha|$ is infinite and $\lambda_{i,j,k}\neq 0$, 
then $j=k$. 
\end{description}
Moreover, by calculation, 
\[
\begin{array}{r@{\ }c@{\ }l} 
\displaystyle \sum_{i+j+k=d}  
\lambda_{i,j,k} x^i y^j z^{k+1} & = & Xz = zX \\
& = & \displaystyle
\sum_{i+j+k=d}
\left(
\alpha^{i-j} \lambda_{i,j,k} + 
\alpha^{i-3j-3}
\left( \sum_{\nu=0}^{j} \alpha^{3\nu} \right) 
\lambda_{i-2,j+1,k+1} 
\right) x^{i} y^{j} z^{k+1}, \\
\displaystyle \sum_{i+j+k=d}  
\alpha^{-i} \lambda_{i,j,k} x^i y^{j+1} z^k & = & yX = Xy \\
& = & \displaystyle
\sum_{i+j+k=d}
\left(
\alpha^{-k} \lambda_{i,j,k} + 
\alpha^{-2j-k-3}
\left( \sum_{\nu=0}^{k} \alpha^{3\nu} \right) 
\lambda_{i-2,j+1,k+1} 
\right) x^{i} y^{j+1} z^k .
\end{array}
\]
Thus we have 
\begin{align*}
\textbf{(ii):}
\quad \displaystyle
\left( 1 - \alpha^{i-j}\right) \lambda_{i,j,k} 
&= 
\alpha^{i-3j-3}
\left( \sum_{\nu=0}^{j} \alpha^{3\nu} \right) 
\lambda_{i-2,j+1,k+1}, \\
\quad \displaystyle
\left( 1 - \alpha^{i-k}\right) \lambda_{i,j,k} 
&= 
\alpha^{i-2j-k-3}
\left( \sum_{\nu=0}^{k} \alpha^{3\nu} \right) 
\lambda_{i-2,j+1,k+1} 
\end{align*}
These two equations are equal when $j=k$. 

Next, 
we will calculate $g^{n}=\left(xyz + (1-\alpha^3)^{-1} x^3\right)^n$ 
for each positive integer $n$ with 
$n < |\alpha|$. 
Define
$\kappa^1_{1,1,1}=1$ and $\kappa^1_{3,0,0} = (1-\alpha^3)^{-1}$. 
Then 
$\displaystyle 
\sum_{j=0}^1 \kappa^1_{3-2j, j, j} x^{3-2j} y^j z^j$
is equal to 
$xyz + (1-\alpha^3)^{-1} x^3$. 
Let 
$g^{n}=\displaystyle 
\left(xyz + (1-\alpha^3)^{-1} x^3\right)^n = 
\sum_{j=0}^n \kappa^n_{3n-2j, j, j} x^{3n-2j} y^j z^j$, 
and, 
by induction on $n$, 
let us show that, 
for each $j$, 
\[
\begin{array}{r@{\ }l} 
\left( 1 - \alpha^{3n-3j}\right) 
\kappa^n_{3n-2j, j, j}  & = \displaystyle 
\alpha^{3n-5j-3}
\left( \sum_{\nu=0}^{j} \alpha^{3\nu} \right) 
\kappa^n_{3n-2j-2, j+1, j+1} 
\\
& =  \displaystyle 
\alpha^{3n-5j-3}
(1-\alpha^3)^{-1}(1-\alpha^{3j+3})
\kappa^n_{3n-2j-2, j+1, j+1},\quad\cdots (\ast)
\end{array}
\]
which represents the same formula of {\bf (ii)}. 
Indeed, 
this is true when $n=1$. 
For $n+1$, 
\[
\begin{array}{c@{\ }l} 
& \displaystyle 
\sum_{j=0}^{n+1} \kappa^{n+1}_{3n + 3 -2j, j, j} x^{3n+3-2j} y^j z^j
\\ 
= & \displaystyle 
\left(xyz + (1-\alpha^3)^{-1} x^3\right)^{n+1} 
\\
= & \displaystyle 
\left( \sum_{j=0}^n \kappa^n_{3n-2j, j j} x^{3n-2j} y^j z^j \right)
\left(xyz + (1-\alpha^3)^{-1} x^3\right) 
\\
= & \displaystyle
\sum_{j=0}^n 
\kappa^n_{3n-2j, j, j}
\left( 
x^{3n-2j+1} y^j 
\left( \alpha^{-j} y z^j + 
\alpha^{-j} \left( \sum_{\nu=0}^{j-1} \alpha^{3\nu} \right)  
x^2 z^{j-1}
\right) 
z + 
(1-\alpha^3)^{-1} x^{3n + 3 -2j} y^j z^j
\right) 
\\
= & \displaystyle
\sum_{j=0}^n 
\kappa^n_{3n-2j, j, j}
\left( 
\alpha^{-j} x^{3n-2j+1} y^{j+1} z^{j+1} + 
\alpha^{-3j} (1-\alpha^3)^{-1} x^{3n+3-2j} y^j z^j
\right)
\\
= & \displaystyle
\sum_{j=0}^{n+1} \left( 
\alpha^{-j+1} \kappa^n_{3n+2-2j, j-1, j-1} + 
\alpha^{-3j} (1-\alpha^3)^{-1} \kappa^n_{3n-2j, j, j}
\right)
x^{3n+3-2j} y^j z^j ,
\end{array}
\]
where $\kappa^n_{3n+2, -1, -1} = \kappa^n_{n-2, n+1, n+1}=0$. 
Therefore, 
for each $j$, 
\[
\begin{array}{c@{\ }l} 
\kappa^{n+1}_{3n + 3 -2j, j, j}  & = 
\alpha^{-j+1} \kappa^n_{3n+2-2j, j-1, j-1} + 
\alpha^{-3j} (1-\alpha^3)^{-1} \kappa^n_{3n-2j, j, j} 
\\ & = 
\alpha^{-j+1} 
(1-\alpha^{3n-3j+3})^{-1} \alpha^{3n-5j +2} (1-\alpha^3)^{-1} 
(1-\alpha^{3j}) \kappa^n_{3n-2j, j , j} 
\\ & \hfill + 
\alpha^{-3j} (1-\alpha^3)^{-1} \kappa^n_{3n-2j, j, j} 
\\ & = 
\alpha^{-3j} (1-\alpha^{3n+3}) 
(1-\alpha^3)^{-1} (1-\alpha^{3n-3j+3})^{-1} \kappa^n_{3n-2j, j, j} .
\end{array}
\]
It follows that 
\[
\begin{array}{r@{\ }l} 
& \left( 1 - \alpha^{3n+3-3j}\right) 
\kappa^{n+1}_{3n+3-2j, j, j}  
\\  = & 
\left( 1 - \alpha^{3n+3-3j}\right) 
\alpha^{-3j} (1-\alpha^{3n+3}) 
(1-\alpha^3)^{-1} (1-\alpha^{3n-3j+3})^{-1} \kappa^n_{3n-2j, j, j} 
\\  = & 
\alpha^{-3j} (1-\alpha^{3n+3}) 
(1-\alpha^3)^{-1} 
\left( 1 - \alpha^{3n-3j}\right)^{-1} 
\alpha^{3n-5j-3}
(1-\alpha^3)^{-1}(1-\alpha^{3j+3})
\kappa^n_{3n-2j-2, j+1, j+1} 
\\  = & 
\alpha^{3n-5j} (1-\alpha^3)^{-1} (1-\alpha^{3j+3})
\alpha^{-3j-3} (1-\alpha^{3n+3}) 
(1-\alpha^3)^{-1} \left( 1 - \alpha^{3n-3j}\right)^{-1} 
\kappa^n_{3n-2j-2, j+1, j+1} 
\\ = & \displaystyle 
\alpha^{3n-5j}
(1-\alpha^3)^{-1}(1-\alpha^{3j+3})
\kappa^{n+1}_{3n+1-2j, j+1, j+1} . 
\end{array}
\]

\

\noindent
(1):\,
{\bf Case 1.} 
Suppose that 
$|\alpha|$ is finite and  $|\alpha| = \left| \alpha^3\right|$,
that is, 
$|\alpha|$ cannot be devided by $3$. 
Then 
$\displaystyle 
\sum_{\nu=0}^{|\alpha|-1} \alpha^{3\nu} = 0 $, 
and 
$\displaystyle 
\sum_{\nu=0}^{j} \alpha^{3\nu} \neq 0 $
for each integer $j < |\alpha| -1$. 
Let us show that 
any element 
$X=\displaystyle \sum_{i+j+k=d} \lambda_{i,j,k} x^i y^j z^k$
of the center of $A'$ 
is generated by 
$x^{|\alpha|}$, 
$y^{|\alpha|}$, 
$z^{|\alpha|}$
and 
$g=xyz + (1-\alpha^3)^{-1} x^3$, 
by induction on $d$ and 
the number of non-zero coefficients. 
Suppose that $i_0$ is the largest 
such that 
$\lambda_{i_0,j_0,k_0}$ be a non-zero coefficient 
and  $i_0+j_0+k_0=d$. 
By {\bf (i)}, 
$k_0 - j_0$ is divided by $|\alpha|$. 
Moreover, if both $j_0$ and $k_0$ are non-zero, 
then it follows from {\bf (ii)} 
and the fact that $\lambda_{i_0+2, j_0-1, k_0-1}=0$ 
that 
$\displaystyle 
\sum_{\nu=0}^{j_0-1} \alpha^{3\nu} = 0 $. 
This implies that 
$j_0$ is divided by $|\alpha|$, and hence 
$k_0$ is divided by $|\alpha|$ too. 
Even if $j_0$ or $k_0$ is zero, 
both $j_0$ and $k_0$ are divided by $|\alpha|$. 

By {\bf (ii)}, 
for each $j$, 
\[
\left( 1 - \alpha^{i_0-j_0- 3j}\right) \lambda_{i_0-2j,j_0+j,k_0+j} = 
\alpha^{i_0-3j_0 -3j -3}
\left( \sum_{\nu=0}^{j_0 +j} \alpha^{3\nu} \right) 
\lambda_{i_0-2j -2,j_0+j+1,k_0 + j+1} .
\]
Let $h$ be the non-negative integer less than $|\al|$ such that 
$i_{0}-j_{0}-3h$ is divided by $|\al|$ 
(equivalently, 
$i_0-3h$ is divided by $|\alpha|$)
or 
$\displaystyle 
\sum_{\nu=0}^{j_0 +h} \alpha^{3\nu} = 0$
(equivalently, 
$\displaystyle 
\sum_{\nu=0}^{h} \alpha^{3\nu} = 0$, 
hence $h=|\alpha|-1$). 
By the choice of $h$, 
the term 
$\displaystyle
\sum_{j =0}^{h} 
\lambda_{i_0-2 j ,j_0 + j , k_0 + j}
x^{i_0 - 2 j} y^{j_0+j} z^{k_0 + j} $
is an element of the center of $A'$. 
By inductive hypothesis, 
it suffices to show that 
the term 
$\displaystyle
\sum_{j =0}^{h} 
\lambda_{i_0-2 j ,j_0 + j , k_0 + j}
x^{i_0 - 2 j} y^{j_0+j} z^{k_0 + j} $
is generated by 
$x^{|\alpha|}$, 
$y^{|\alpha|}$, 
$z^{|\alpha|}$
and 
$xyz + (1-\alpha^3)^{-1} x^3$. 

Suppose that 
$i_0$ is divided by $|\alpha|$. 
Then 
by the choice of $h$, $h=0$ holds. 
So the term 
$\displaystyle
\sum_{j =0}^{h} 
\lambda_{i_0-2 j ,j_0 + j , k_0 + j}
x^{i_0 - 2 j} y^{j_0+j} z^{k_0 + j} $
is equal to 
$\lambda_{i_0 ,j_0 , k_0}
x^{i_0 } y^{j_0} z^{k_0}$.
Since all of $i_0$, $j_0$ and $k_0$ are divided by $|\alpha|$, 
$\displaystyle
\sum_{j =0}^{h} 
\lambda_{i_0-2 j ,j_0 + j , k_0 + j}
x^{i_0 - 2 j} y^{j_0+j} z^{k_0 + j}=\lambda_{i_0 ,j_0 , k_0}
x^{i_0 } y^{j_0} z^{k_0}$ is generated by 
$x^{|\alpha|}$, 
$y^{|\alpha|}$, 
$z^{|\alpha|}$. 

Suppose that 
$i_0$ is not divided by $|\alpha|$. 
Then, as seen above ($\ast$), 
the coefficients of the term 
$$
\displaystyle
\sum_{j =0}^{h} 
\lambda_{i_0-2 j ,j_0 + j , k_0 + j}
x^{i_0 - 2 j} y^{j_0+j} z^{k_0 + j}
$$ 
satisfies the same relationship about the coefficients of the term 
$\left( xyz + (1-\alpha^3)^{-1} x^3 \right)^h $. 
Therefore, the term 
$\displaystyle
\sum_{j =0}^{h} 
\lambda_{i_0-2 j ,j_0 + j , k_0 + j}
x^{i_0 - 2 j} y^{j_0+j} z^{k_0 + j} $ 
is equal to 
\[
\gamma
\left( xyz + (1-\alpha^3)^{-1} x^3 \right)^h 
x^{i_0 - 3 h } y^{j_0} z^{k_0}
\]
for some $\gamma \in k$. 
By the choice of $h$ and {\bf (ii)}, 
$i_0-3h$, $j_0$ and $k_0$ 
are divided by $|\alpha|$. 
Therefore, $\displaystyle
\sum_{j =0}^{h} 
\lambda_{i_0-2 j ,j_0 + j , k_0 + j}
x^{i_0 - 2 j} y^{j_0+j} z^{k_0 + j} $ 
is generated by 
$x^{|\alpha|}$, 
$y^{|\alpha|}$, 
$z^{|\alpha|}$
and 
$\displaystyle
xyz + (1-\alpha^3)^{-1} x^3$. 

\noindent
{\bf Case 2.} 
Suppose that 
$|\alpha|$ is finite and 
$|\alpha| \neq \left| \alpha^3\right|$. 
Then $|\alpha| = 3 \left| \alpha^3\right|$ holds, 
so $\displaystyle 
\sum_{\nu=0}^{|\alpha^3|-1} \alpha^{3\nu} = 0 $, 
and for each integer $j < \left|\alpha^3\right| -1$, 
$\displaystyle 
\sum_{\nu=0}^{j} \alpha^{3\nu} \neq 0 $, 
also, the following equations hold; 
\[
\begin{array}{lll}
y x^{|\alpha^3|} = \alpha^{-|\alpha^3|} x^{|\alpha^3|} y ,
& 
z x^{|\alpha^3|} = \alpha^{|\alpha^3|} x^{|\alpha^3|} z ,
& 
y^{|\alpha^3|} x = \alpha^{-|\alpha^3|} x^{|\alpha^3|} y ,
\\[10pt]
z^{|\alpha^3|} x = \alpha^{|\alpha^3|} x^{|\alpha^3|} z ,
& 
z y^{|\alpha^3|} = \alpha^{-|\alpha^3|} y^{|\alpha^3|} z ,
&
z^{|\alpha^3|} y = \alpha^{-|\alpha^3|} y z^{|\alpha^3|} . 
\end{array}
\]
So $x^{|\alpha^3|}y^{|\alpha^3|}z^{|\alpha^3|}$
is an element of the center of $A$. 
As seen before, 
for each $n < |\alpha^3|$, 
when $\displaystyle 
\left(xyz + (1-\alpha^3)^{-1} x^3\right)^n $ 
is denoted by 
$\sum_{j=0}^n \kappa^n_{3n-2j, j, j} x^{3n-2j} y^j z^j$, 
for each $j$, 
\[
\begin{array}{r@{\ }l} 
\left( 1 - \alpha^{3n-3j}\right) 
\kappa^n_{3n-2j, j, j}  & = \displaystyle 
\alpha^{3n-5j-3}
\left( \sum_{\nu=0}^{j} \alpha^{3\nu} \right) 
\kappa^n_{3n-2j-2, j+1, j+1} 
\\
& =  \displaystyle 
\alpha^{3n-5j-3}
(1-\alpha^3)^{-1}(1-\alpha^{3j+3})
\kappa^n_{3n-2j-2, j+1, j+1} ,
\end{array}
\]
which represents the same formula of {\bf (ii)}. 
Hence, 
when $n=|\alpha^3|$, 
for each integer $j$ with $1\leq j <|\alpha^3|$, 
$\kappa^{|\alpha^3|}_{3|\alpha^3|-2j,j,j}=0$ holds. 
Moreover, 
by induction on $n$, 
it is proved that 
$\kappa^n_{n,n,n}=\alpha^{-\frac{(n-1)n}{2}}$
and 
$\kappa^n_{3n,0,0} = (1-\alpha^3)^{-n}$. 
Therefore, 
\[
\left(xyz + (1-\alpha^3)^{-1} x^3\right)^{|\alpha^3|} = 
\alpha^{-\frac{(|\alpha^3|-1) |\alpha^3| }{2}} 
x^{|\alpha^3|}y^{|\alpha^3|}z^{|\alpha^3|} + 
(1-\alpha^3)^{-|\alpha^3|} x^{3|\alpha^3|}.
\]
Hence 
\[
x^{|\alpha^3|}y^{|\alpha^3|}z^{|\alpha^3|} = 
\alpha^{\frac{(|\alpha^3|-1) |\alpha^3| }{2}} 
\left(xyz + (1-\alpha^3)^{-1} x^3\right)^{|\alpha^3|} - 
\alpha^{\frac{(|\alpha^3|-1) |\alpha^3| }{2}} 
(1-\alpha^3)^{-|\alpha^3|} x^{|\alpha|} .
\]

Let us show that 
any element 
$X=\displaystyle \sum_{i+j+k=d} \lambda_{i,j,k} x^i y^j z^k$
of the center of $A'$ 
is generated by 
$x^{|\alpha|}$, 
$y^{|\alpha|}$, 
$z^{|\alpha|}$ 
and 
$xyz + (1-\alpha^3)^{-1} x^3$, 
by induction on $d$ and 
the number of non-zero coefficients. 
Suppose that $i_0$ is the largest 
such that 
$\lambda_{i_0,j_0,k_0}$ be a non-zero coefficient 
and  $i_0+j_0+k_0=d$. 
By {\bf (i)}, 
$k_0 - j_0$ is divided by $3|\alpha^3|$ ($=|\alpha|$). 
Moreover, if both $j_0$ and $k_0$ are non-zero, 
then it follows from the first equation of {\bf (ii)} 
and the fact that $\lambda_{i_0+2, j_0-1, k_0-1}=0$ 
that 
$\displaystyle 
\sum_{\nu=0}^{j_0-1} \alpha^{3\nu} = 0 $,
which implies that 
$j_0$ is divided by $|\alpha^3|$. 
Similarly, 
by  the second equation of {\bf (ii)}, 
it follows that 
$k_0$ is divided by $|\alpha^3|$. 
Even if $j_0$ or $k_0$ is zero, 
both $j_0$ and $k_0$ are divided by $|\alpha^3|$. 
Let 
$j_0 =  |\alpha^3|(3j_1+j_2)$, 
and 
$k_0 = |\alpha^3| (3k_1+j_2)$, 
where 
$0\leq  j_2 \leq 2$.

Let $h$ be the non-negative integer less than $|\al|$ such that 
$i_{0}-j_{0}-3h$ is divided by $3|\al^{3}|$ 
(equivalently, 
$i_0-3h$ is divided by $|\alpha^{3}|$)
or 
$\displaystyle 
\sum_{\nu=0}^{j_0 +h} \alpha^{3\nu} = 0$
(equivalently, 
$\displaystyle 
\sum_{\nu=0}^{h} \alpha^{3\nu} = 0$, 
hence $h=|\alpha^{3}|-1$). 
Then, 
by the choice of $h$, the term 
$\displaystyle
\sum_{j =0}^{h} 
\lambda_{i_0-2 j ,j_0 + j , k_0 + j}
x^{i_0 - 2 j} y^{j_0+j} z^{k_0 + j} $
is an element of the center of $A'$. 
By inductive hypothesis, 
it suffices to show that 
the term 
$\displaystyle
\sum_{j =0}^{h} 
\lambda_{i_0-2 j ,j_0 + j , k_0 + j}
x^{i_0 - 2 j} y^{j_0+j} z^{k_0 + j} $
is generated by 
$x^{|\alpha|}$, 
$y^{|\alpha|}$, 
$z^{|\alpha|}$, 
$x^{|\alpha^3|}y^{|\alpha^3|}z^{|\alpha^3|}$ 
and 
$g=xyz + (1-\alpha^3)^{-1} x^3$.

Suppose that 
$i_0 - j_0 $ is divided by $3|\alpha^3|$. 
Then 
by the choice of $h$, 
$h=0$
and hence 
the term 
$\displaystyle
\sum_{j =0}^{h} 
\lambda_{i_0-2 j ,j_0 + j , k_0 + j}
x^{i_0 - 2 j} y^{j_0+j} z^{k_0 + j} $
is equal to 
$\lambda_{i_0 ,j_0 , k_0}
x^{i_0 } y^{j_0} z^{k_0}$. 
Let 
$i_0 = |\alpha^3| (3i_1+i_2)$, 
where 
$0\leq i_2 \leq 2$.
Since $i_0 - j_0 $ is divided by $3|\alpha^3|$, 
$i_2=j_2$.
Since 
$x^{|\alpha|}$, 
$y^{|\alpha|}$, 
$z^{|\alpha|}$
and 
$x^{|\alpha^3|}y^{|\alpha^3|}z^{|\alpha^3|}$ 
are elements of the center of $|\alpha|$, 
and $|\alpha|=3|\alpha^3|$, 
it follows that, 
for some $\gamma\in k$, 
\[
\lambda_{i_0 ,j_0 , k_0}
x^{i_0 } y^{j_0} z^{k_0} = 
\gamma 
\left( x^{|\alpha|}\right)^{i_1} 
\left( y^{|\alpha|} \right)^{j_1}
\left( z^{|\alpha|}\right)^{k_1}
\left( x^{|\alpha^3|}y^{|\alpha^3|}z^{|\alpha^3|} \right)^{i_2} ,
\]
which is generated by 
$x^{|\alpha|}$, 
$y^{|\alpha|}$, 
$z^{|\alpha|}$
and 
$x^{|\alpha^3|}y^{|\alpha^3|}z^{|\alpha^3|}$, 
so
is generated by 
$x^{|\alpha|}$, 
$y^{|\alpha|}$, 
$z^{|\alpha|}$ 
and 
$xyz + (1-\alpha^3)^{-1} x^3$.

Suppose that 
$i_0- j_0$ is not divided by $3|\alpha^3|$
and 
$i_0-j_0-3h$ is divided by $3|\alpha^3|$. 
Since $j_0$ is divided by $|\alpha^3|$, 
$i_0 - 3 h $ is divided by $|\alpha^3|$. 
Since 
$j_0 =  |\alpha^3|(3j_1+j_2)$, 
there exists a non-negative integer  $i_1$
such that 
$i_0 - 3 h = |\alpha^3|(3i_1 + j_2)$. 
Since 
$x^{|\alpha|}$, 
$y^{|\alpha|}$, 
$z^{|\alpha|}$, 
$x^{|\alpha^3|}y^{|\alpha^3|}z^{|\alpha^3|}$ 
and 
$xyz + (1-\alpha^3)^{-1} x^3$
are elements of the center of $A'$, 
as in the previous case, 
there exists $\gamma \in K$ 
such that 
$\displaystyle 
\sum_{j =0}^{h} 
\lambda_{i_0-2 j , j_0+j , k_0 + j} 
x^{i_0 - 2 j} y^{j_0 + j} z^{k_0 + j} $
is equal to 
\[
\gamma 
\left(
xyz + (1-\alpha^3)^{-1} x^3
\right)^h
x^{|\alpha|i_1}
y^{|\alpha|j_1} 
z^{|\alpha|k_1}
\left( x^{|\alpha^3|}y^{|\alpha^3|}z^{|\alpha^3|} \right)^{i_2} ,
\]
which 
is generated by 
$x^{|\alpha|}$, 
$y^{|\alpha|}$, 
$z^{|\alpha|}$, 
$x^{|\alpha^3|}y^{|\alpha^3|}z^{|\alpha^3|} $
and 
$xyz + (1-\alpha^3)^{-1} x^3$.

Suppose that 
$i_0-j_0-3j$ is not divided by $3|\alpha^3|$ 
for every $j<|\alpha^3|-1$. 
Then $h=|\alpha^{3}|-1$, hence
$\displaystyle 
\sum_{\nu=0}^{h} \alpha^{3\nu} = 0$, 
and 
by {\bf (ii)}, 
$\lambda_{i_0-2h, j_0+h, k_0+h}\neq 0$. 
Therefore, by {\bf (ii)}, 
$i_0 - j_0 -3h$ is divided by $3|\alpha^3|$. 
So as in the previous paragraph, 
$\displaystyle 
\sum_{j =0}^{h} 
\lambda_{i_0-2 j , j_0+j , k_0 + j} 
x^{i_0 - 2 j} y^{j_0 + j} z^{k_0 + j} $
is generated by 
$x^{|\alpha|}$, 
$y^{|\alpha|}$, 
$z^{|\alpha|}$, 
$x^{|\alpha^3|}y^{|\alpha^3|}z^{|\alpha^3|} $
and 
$xyz + (1-\alpha^3)^{-1} x^3$, 
so
is generated by 
$x^{|\alpha|}$, 
$y^{|\alpha|}$, 
$z^{|\alpha|}$ 
and 
$xyz + (1-\alpha^3)^{-1} x^3$. 

\

\noindent
(2):\,
{\bf Case 3.} 
Suppose that $|\alpha|$ is infinite.
Let us show that any element 
$X=\displaystyle \sum_{i+j+k=d} \lambda_{i,j,k} x^i y^j z^k$
of the center of $A'$ 
is generated by 
$g=xyz + (1-\alpha^3)^{-1} x^3$, 
by induction on $d$ and 
the number of non-zero coefficients. 
By {\bf (i)}, 
$\lambda_{i,j,k}$ have to be $0$ if $j\neq k$. 
Suppose that $i_0$ is the largest 
such that 
$\lambda_{i_0,j_0,j_0}$ be a non-zero coefficient 
and  $i_0+2 j_0 =d$. 
If $j_0$ is non-zero, 
then it follows from {\bf (ii)} 
and the fact that $\lambda_{i_0+2, j_0-1, j_0-1}=0$ 
that 
$\displaystyle 
\alpha^{i_0 - 3 j_0 +2}
\sum_{\nu=0}^{j_0-1} \alpha^{3\nu} = 0 $,
which is a contradiction because 
$|\alpha|$ is infinite.
Thus $j_0=0$. 
It follows that 
$X=\displaystyle \sum_{j=0}^{h} \lambda_{i_0-2j,j,j} x^{i_0-2j} y^j z^j$, 
where 
$h$ is the integer such that 
$i_0-3h \in \{0,1,2\}$. 
Then,
the coefficients of the term 
$\displaystyle
\sum_{j =0}^{h} 
\lambda_{i_0-2 j ,j , j}
x^{i_0 - 2 j} y^{j} z^{ j} $ 
satisfies the same relationship 
about 
the coefficients of the term 
$\left( xyz + (1-\alpha^3)^{-1} x^3 \right)^h $, 
hence 
\[ X = 
\gamma
\left( xyz + (1-\alpha^3)^{-1} x^3 \right)^h 
x^{i_0 - 3 h } 
\]
for some $\gamma \in k$. 
Since $X$ and $xyz + (1-\alpha^3)^{-1} x^3 $ are elements of 
the center of $A'$, 
and $|\alpha|$ is infinite, 
$i_0 = 3h$ holds. 
Therefore, $X$ is generated by 
$g=\displaystyle
xyz + (1-\alpha^3)^{-1} x^3$. 
\end{proof}
\section{Main Results}

Finally, in this section, we prove that the conjecture in Remark \ref{rem-IMo1} hold 
for a Type S' algebra. 
In order to prove this, we need to prove the same statements of Theorem \ref{Mo2-thm4.17} and Theorem \ref{Mo2-thm4.21}
in Subsection 2.2. 
For a $3$-dimensional quantum polynomial algebra of Type S', 
we remark the following: 
\begin{rem}
\label{rem-TypeS'}
Let $A=\mathcal{A}(E,\si)=k\langle x,y,z\rangle/(f_{1},f_{2},f_{3})$
be a $3$-dimensional quantum polynomial algebra of Type S' 
where 
$$\left\{
\begin{array}{ll}
f_{1}=yz-\al zy+x^{2},\\
f_{2}=zx-\be xz,\\
f_{3}=xy-\be yx\quad (\al,\be\in k,\,\al\be^{2}\neq 0,1)
\end{array}
\right.$$
(see \cite[Theorem 3.2]{IMa1}, \cite[Table 1 in Proposition 3.1]{IMa2}). 
For a $3$-dimensional quantum polynomial algebra $A=\cA(E,\si)$ of Type S', 
there exists the $3$-dimensional Calabi-Yau quantum polynomial algebra $A'$ of Type S' 
such that $\grmod\,A\cong \grmod\,A'$ 
so that $\Projn A\cong \Projn A'$ 
by Lemma \ref{IMa-Main2}, 
where $A'=\mathcal{A}(E,\si')=k\langle x,y,z\rangle/(g_{1},g_{2},g_{3})$; 
$$\left\{
\begin{array}{ll}
g_{1}=yz-\alpha zy+x^{2},\\
g_{2}=zx-\alpha xz,\\
g_{3}=xy-\alpha yx\quad (\alpha^{3}\neq 0,1)
\end{array}
\right.$$
(see \cite[Table 2 in Theorem 3.4]{IMa2}).  
\end{rem}

The same statement of Theorem \ref{Mo2-thm4.17} holds for a Type S' algebra: 
\begin{thm}
\label{thm1}
Let $A=\cA(E,\si)$ be a $3$-dimensional quantum polynomial algebra of Type S'. 
Then the following are equivalent: 
\begin{enumerate}[{\rm (1)}]
\item $\|\si\|<\infty$. 
\item $\Projn A$ is finite over its center. 
\item The Beilinson algebra $\nabla A$ of $A$ is $2$-representation tame. 
\end{enumerate}
\end{thm}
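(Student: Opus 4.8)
The plan is to reduce everything to the Calabi-Yau case and then invoke the center computation from Proposition \ref{prop-S'}. The equivalence $(1)\Leftrightarrow(2)$ is already available in full generality: by Lemma \ref{q.nu} (or Lemma \ref{thm.main-IMo}, since a Type S' algebra has $E\neq\PP^2$), the conditions $\|\si\|<\infty$ and $\Projn A$ finite over its center are equivalent for any $3$-dimensional quantum polynomial algebra. So the real content is the equivalence with condition $(3)$, that $\nabla A$ is $2$-representation tame. By Remark \ref{rem-TypeS'}, there is a $3$-dimensional Calabi-Yau quantum polynomial algebra $A'$ of Type S' with $\grmod A\cong\grmod A'$, hence $\Projn A\cong\Projn A'$; and by Proposition \ref{lem_Mo2}(2) this forces $\|\si\|=\|\si'\|$. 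Since the Beilinson construction $\nabla$ and the notion of $2$-representation tameness are invariant under graded Morita equivalence (equivalently, under $\grmod A\cong\grmod A'$), it suffices to prove the theorem for the explicit Calabi-Yau model $A'=k\langle x,y,z\rangle/(g_1,g_2,g_3)$. In the Calabi-Yau case Lemma \ref{q-main} gives $\|\si'\|=|(\si')^3|=|\alpha^3|$, so $(1)$ becomes the concrete condition $|\alpha^3|<\infty$, i.e. $|\alpha|<\infty$.

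First I would establish $(1)\Rightarrow(3)$ for $A'$. Assume $|\alpha|<\infty$, so $A'$ is finite over its center, and by Proposition \ref{prop-S'}(1) we have the explicit description $Z(A')=k[x^{|\alpha|},y^{|\alpha|},z^{|\alpha|},g]$. The strategy is to pass to the preprojective algebra $\Pi(\nabla A')$ and show it is right noetherian and finite over its center, which is precisely Definition \ref{def-d-rep-tame}. The key structural input is that, following Mori's treatment for Type S in Theorem \ref{Mo2-thm4.17}, the preprojective algebra of the Beilinson algebra is controlled by a quasi-Veronese algebra of $A'$: by Lemma \ref{Mo2-lem4.10} one has $Z(A'^{[r]})\cong Z(A')^{(r)}$, and by Lemma \ref{Mo2-prop4.11}, since $A'$ is noetherian and finite over its noetherian center $Z(A')$, the quasi-Veronese algebra $A'^{[r]}$ is right noetherian and finite over its center. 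Transporting this along the identification of $\Pi(\nabla A')$ with the relevant (quasi-)Veronese construction yields $2$-representation tameness.

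For the converse $(3)\Rightarrow(1)$ I would argue contrapositively: if $|\alpha|=\infty$, then by Proposition \ref{prop-S'}(2) the center is merely $Z(A')=k[g]$, a polynomial ring in the single degree-$3$ element $g$. Because $A'$ has Hilbert series $(1-t)^{-3}$, it grows too fast to be finite over a center generated in this way, so $A'$ is not finite over its center; transporting through the Veronese/quasi-Veronese dictionary, the preprojective algebra $\Pi(\nabla A')$ fails to be finite over its center, so $\nabla A'$ is not $2$-representation tame. I expect the main obstacle to be the bookkeeping in this transport step: making rigorous the passage between finiteness of $A'$ (resp. $A'^{[r]}$) over its center and finiteness of $\Pi(\nabla A')$ over \emph{its} center. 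This is exactly where the degree-$3$ normalizing element $g$ and the decomposition machinery of Lemma \ref{Mo2-thm4.20} enter, and where the Type S' computation genuinely differs from Type S, since here $g=xyz+(1-\alpha^3)^{-1}x^3$ carries the extra $x^3$ term coming from the relation $g_1=yz-\alpha zy+x^2$. The saving grace is that Proposition \ref{prop-S'} has already absorbed that difference into an explicit, clean description of $Z(A')$ that is formally parallel to the Type S center in Example \ref{ex-TypeS}, so the remaining representation-theoretic argument should run in close analogy with Mori's proof of Theorem \ref{Mo2-thm4.17}.
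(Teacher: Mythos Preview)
Your overall strategy matches the paper's: reduce to the Calabi--Yau model $A'$, use Lemma~\ref{q.nu} for $(1)\Leftrightarrow(2)$, and handle $(1)\Leftrightarrow(3)$ via the preprojective algebra and Proposition~\ref{prop-S'}. Two points need sharpening.

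First, the link you leave vague is the identification $\Pi(\nabla A)\cong\Pi(\nabla A')\cong (A')^{[3]}$, which the paper cites from \cite[Lemma 4.16(4), Theorem 1.6(2)]{Mo2} using that $A'$ is Calabi--Yau (so $\nu=\id$). Once this is in place, the $(1)\Rightarrow(3)$ direction is exactly as you outline, with $r=3$ in Lemma~\ref{Mo2-prop4.11}. Note that Lemma~\ref{Mo2-thm4.20} plays no role in this theorem; it is used only in Theorem~\ref{thm2}.

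Second, your $(3)\Rightarrow(1)$ argument has a small logical slip. You argue that $A'$ is not finite over $Z(A')$ and then ``transport'' this to $\Pi(\nabla A')$; but non-finiteness does not transport through the quasi-Veronese construction (Lemma~\ref{Mo2-prop4.11} only goes the other way). The paper instead computes the center of $\Pi(\nabla A')$ directly: by Lemma~\ref{Mo2-lem4.10}, $Z\bigl((A')^{[3]}\bigr)\cong Z(A')^{(3)}=k[g]^{(3)}=k[w]$ with $\deg w=1$, which has GK-dimension $1$, whereas $\mathrm{GKdim}\,\Pi(\nabla A')=\mathrm{GKdim}\,A'=3$. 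This dimension gap forces $\Pi(\nabla A')$ to be non-finite over its center, so $\nabla A$ is not $2$-representation tame.
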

\begin{proof}
Let $A$ be a $3$-dimensional quantum polynomial algebra of Type S' and 
$A'$ a $3$-dimensional Calabi-Yau quantum polynomial algebra of Type S' as in Remark \ref{rem-TypeS'}. 

Considering that the Nakayama automorphism $\nu$ of $A'$ is identity because $A'$ is Calabi-Yau, 
by \cite[Lemma 4.16 (4), Theorem 1.6 (2)]{Mo2}, 
we have 
$$
\Pi(\nabla\,A)\cong \Pi(\nabla\,A')\cong (A')^{[3]}
$$
where $\Pi(\nabla\,A)$ (respectively, $\Pi(\nabla A')$) is the preprojective algebra of $\nabla\,A$ 
(respectively, $\nabla\,A'$), 
and $(A')^{[3]}$ is the $3$rd quasi-Veronese algebra of $A'$. 

(1)$\,\Longleftrightarrow\,$(2): It follows from Lemma \ref{q.nu}. 

(1)$\,\Longrightarrow\,$(3): 
If $\|\sigma\|<\infty$, then $|\si'|(=|\al^{3}|)=\|\si'\|=\|\si\|<\infty$ 
by Lemma \ref{q-main} and Proposition \ref{lem_Mo2} (2). 
So, $A'$ is finite over $Z(A')$ by Theorem \ref{thm_{ATV2}}. 
It follows from Proposition \ref{prop-S'} (1) 
that $Z(A')=k[x^{|\alpha|},y^{|\alpha|},z^{|\alpha|},g]$ is noetherian 
where $g=xyz + (1-\alpha^3)^{-1} x^3 \in A_{3}$. 
We remark that $|\si'|=|\al^{3}|<\infty\,\Longleftrightarrow\, |\al|<\infty$. 
Therefore, by Lemma \ref{Mo2-prop4.11}, 
$(A')^{[3]}$ is right noetherian and finite over its center. 
It follows from $(A')^{[3]}\cong \Pi(\nabla\,A')\cong \Pi(\nabla\,A)$ 
that $\Pi(\nabla\,A)$ is right noetherian and finite over its center. 
So, by Definition \ref{def-d-rep-tame}, $\nabla\,A$ is $2$-representation tame. 

(3)$\,\Longrightarrow\,$(1): 
if $\|\si\|=\infty$, then $\|\si'\|=|\si'|=|\al|=\infty$, 
so, by Proposition \ref{prop-S'} (2),  $Z(A')=k[g]$, 
where $g=xyz + (1-\alpha^3)^{-1} x^3 \in A_{3}$. 
By Lemma \ref{Mo2-lem4.10}, 
$$
Z(\Pi(\nabla\,A'))=Z((A')^{[3]})=Z(A')^{[3]}=k[g]^{(3)}=k[w]
$$
with ${\rm deg}\,w=1$, 
where $Z(A')^{[3]}$ is the $3$rd quasi-Veronese algebra of $Z(A')^{[3]}$ 
and $k[g]^{(3)}$ is the $3$rd quasi-Veronese algebra of $k[g]$. 
Note that $\mathrm{GKdim}\,\Pi(\nabla\,A')=\mathrm{GKdim}\,A'=3$. 
On the other hand, 
$\mathrm{GKdim}\,Z(\Pi(\nabla\,A'))=\mathrm{GKdim}\,k[w]=1$. 
Therefore, $\Pi(\nabla\,A')\cong \Pi(\nabla\,A)$ is not finite over its center. 
By Definition \ref{def-d-rep-tame}, $\nabla\,A$ is not $2$-representation tame. 
\end{proof}

The same statement of Theorem \ref{Mo2-thm4.21} holds for a Type S' algebra: 
\begin{thm}
\label{thm2}
Let $A=\cA(E,\si)$ be a $3$-dimensional quantum polynomial algebra of Type S'. 
\begin{enumerate}[{\rm (1)}]
\item If the Beilinson algebra $\nabla A$ of $A$ is not $2$-representation tame, 
then the isomorphism classes of simple $2$-regular modules over $\nabla A$ 
are parametrized by the set of points of $E\subsetneq \PP^{2}$. 
\item If the Beilinson algebra $\nabla A$ of $A$ is $2$-representation tame, 
then the isomorphism classes of simple $2$-regular modules over $\nabla A$ 
are parametrized by the set of points of $\PP^{2}$. 
\end{enumerate}
\end{thm}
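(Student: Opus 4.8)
The plan is to read off both parts from Theorem \ref{thm1}. By that theorem $\nabla A$ is $2$-representation tame if and only if $\|\si\|<\infty$, and by the proof of Proposition \ref{prop-S'} one has $\|\si\|=\|\si'\|=|\al^{3}|$; since $\al^{3}\neq 1$, this number is either $\infty$ or an integer $\geq 2$, so the value $\|\si\|=1$ never occurs for Type S'. Hence part (1) is exactly the case $\|\si\|=\infty$: then Lemma \ref{Mo2-thm4.8} parametrizes the isomorphism classes of simple $2$-regular modules by the points of $E$, and since $E$ is a union of a line and a conic, $E\subsetneq\PP^{2}$, which is the assertion.

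For part (2), $\nabla A$ being $2$-representation tame forces $2\leq\|\si\|<\infty$, so $A'$ is finite over its center and, by Lemma \ref{thm.main-IMo}, $\Projn A'$ has fat points (so Lemma \ref{Mo2-thm4.8} no longer applies and we genuinely expect $\PP^{2}$ rather than $E$). First I would reduce to the Calabi--Yau model $A'=\cA(E,\si')$ of Remark \ref{rem-TypeS'}: since $\grmod A\cong\grmod A'$ we have $\Projn A\cong\Projn A'$, fat points correspond to fat points, and simple $2$-regular $\nabla A$-modules correspond to the points of $\Projn A$, with thin points matching $E$ and fat points matching the fat points (this is the mechanism behind Lemma \ref{cor.main-IMo}). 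Thus it suffices to show that $|\Projn A'|$ is parametrized by $\PP^{2}$.

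The heart of the argument is a decomposition along the line component $\cV(x)$ of $E$. The element $x\in A'_{1}$ is a regular normalizing element of positive degree, $A'$ satisfies condition {\rm (H)} as a noetherian connected graded AS-regular algebra, and $A'[x^{-1}]$ is strongly graded because $x$ is a unit of degree $1$; so Lemma \ref{Mo2-thm4.20} gives a bijection $|\Projn A'|\longleftrightarrow|\Projn A'/(x)|\sqcup\Specn A'[x^{-1}]_{0}$ preserving fat points. Setting $x=0$ in the relations of Remark \ref{rem-TypeS'} gives $A'/(x)\cong k\langle y,z\rangle/(yz-\al zy)$, a quantum $\PP^{1}$, whose points are exactly the line $\cV(x)\cong\PP^{1}$ (all thin). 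Putting $a=yx^{-1}$, $b=zx^{-1}$ and using the relation $g_{1}$, a direct computation identifies the degree-$0$ part with a quantized Weyl-type algebra $A'[x^{-1}]_{0}\cong k\langle a,b\rangle/(ab-\al^{3}ba+\al)$. Since $\al^{3}$ is a primitive $m$-th root of unity with $m=|\al^{3}|=\|\si\|$, the powers $a^{m},b^{m}$ are central, the algebra is finite over the polynomial center $k[a^{m},b^{m}]$, and the maximal spectrum of this center is $\mathbb{A}^{2}=\PP^{2}\setminus\cV(x)$. Granting that its simple modules biject with the closed points of this $\mathbb{A}^{2}$ (fat on the Azumaya locus, thin over the image of the conic), the two strata assemble to $|\Projn A'|\longleftrightarrow\PP^{1}\sqcup\mathbb{A}^{2}=\PP^{2}$, as required.

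The main obstacle is the last identification $\Specn A'[x^{-1}]_{0}\longleftrightarrow\mathbb{A}^{2}$: one must show that there is exactly one isomorphism class of simple module over each closed point of the center, and that the operative center is the full ring $k[a^{m},b^{m}]$ rather than the possibly smaller ring $Z(A')[x^{-1}]_{0}$ coming from Proposition \ref{prop-S'}. This is delicate precisely when $3\mid|\al|$ (Case 2 of Proposition \ref{prop-S'}), where $Z(A')[x^{-1}]_{0}$ sits inside $k[a^{m},b^{m}]$ as a proper $\mu_{3}$-type subextension, and a naive center computation would produce a spurious triple cover of $\mathbb{A}^{2}$. I expect to resolve this by computing the center of $A'[x^{-1}]_{0}$ intrinsically from the Weyl-type relation and by a PI/Azumaya argument that controls the ramification along the conic; once the fibers are shown to be single simple modules, gluing the two strata to $\PP^{2}$ is routine.
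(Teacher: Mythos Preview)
Your overall architecture matches the paper's proof exactly: part (1) via Theorem \ref{thm1} and Lemma \ref{Mo2-thm4.8}; part (2) via reduction to the Calabi--Yau model $A'$, the decomposition of $|\Projn A'|$ along the normal element $x\in A'_{1}$ using Lemma \ref{Mo2-thm4.20}, the identification $A'/(x)\cong k\langle y,z\rangle/(yz-\alpha zy)$ giving $|\Projn A'/(x)|=|\PP^{1}|$, and the identification of $A'[x^{-1}]_{0}$ with a quantized Weyl algebra.

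The only substantive divergence is in the last step, $|\Specn A'[x^{-1}]_{0}|=|\mathbb{A}^{2}|$. You propose a PI/Azumaya argument over the center $k[a^{m},b^{m}]$ and correctly flag the obstacle: one must show exactly one simple over each closed point of the center, including the ramification locus, and reconcile this with the possible discrepancy between $Z(A')[x^{-1}]_{0}$ and $Z(A'[x^{-1}]_{0})$ when $3\mid|\alpha|$. The paper bypasses this entirely by citing the explicit classification of irreducible representations of the quantized Weyl algebra at a root of unity due to Heider--Wang \cite{HW} and Drozd--Guzner--Ovsienko \cite{DGO}: every irreducible matrix solution of $wu-\alpha uw=1$ is equivalent to one of two concrete $l\times l$ families $(U_{\lambda},W_{\lambda,\eta})$ or $(U,W_{\beta})$, and the paper writes down a bijection between $\mathbb{A}^{2}$ and this set of solutions. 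This is more direct than your proposed route and avoids the center computation altogether; your approach could in principle be made to work, but you would be reproving a special case of the cited classification.
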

\begin{proof}
Let $A$ be a $3$-dimensional quantum polynomial algebra of Type S' and 
$A'$ a $3$-dimensional Calabi-Yau quantum polynomial algebra of Type S' as in Remark \ref{rem-TypeS'}. 

(1): If $\nabla\,A$ is not $2$-representation tame, then 
$\|\si\|=\infty$ by Theorem \ref{thm1}. 
Therefore, by Lemma \ref{Mo2-thm4.8}
the isomorphism classes of simple $2$-regular modules over $\nabla A$ 
are parametrized by the set of points of $E\subsetneq \PP^{2}$. 

(2):  If $\nabla\,A$ is $2$-representation tame, then 
$\|\si\|(=|\al^{3}|)<\infty$ by Theorem \ref{thm1}. 
(Note that $|\al|:=l<\infty$ holds. )
Since $\Projn A\cong \Projn A'$ holds, $|\Projn A|=|\Projn A'|$ also holds. 
We easily check that $x\in A'_{1}$ is normal element. 
It follows from Lemma \ref{Mo2-thm4.20} that 
$$
|\Projn A|=|\Projn A'|=|\Projn\, A'/(x)|\,\bigsqcup\,|\Specn\, A'[x^{-1}]_{0}|.
$$
Since $A'/(x)\cong k\langle y,z \rangle/(yz-\al zy)$, 
which is $2$-dimensional AS-regular algebra 
(\cite[page 172]{AS}), 
we have $|\Projn\, A'/(x)|=|\PP^{1}|$. 
Also, we claim that 
$A'[x^{-1}]_{0}\cong k\langle u,v\rangle/(uv-\al^{3} vu+\al)
$, 
where $\al^{3}\neq 0,1$, $u:=yx^{-1}$, $v:=zx^{-1}$ and 
$k\langle u,w\rangle/(uw-\al wu-1)$ is called a \textit{quantized Weyl algebra}.
Indeed, 
$$
\begin{cases}
x^{-1}g_{1}x^{-1}=x^{-1}yzx^{-1}-\al x^{-1}zyx^{-1}+1,\\
x^{-1}g_{2}x^{-1}=x^{-1}z-\al zx^{-1}, \\
x^{-1}g_{3}x^{-1}=yx^{-1}-\al x^{-1}y.
\end{cases}
$$
So, we have the following relation; 
\begin{align*}
uv&=(yx^{-1})(zx^{-1})=yx^{-1}zx^{-1}\\
&=\al x^{-1}yzx^{-1}=\al x^{-1}(\al zy-x^{2})x^{-1}\\
&=\al^{2}x^{-1}zyx^{-1}-\al=\al^{3}zx^{-1}yx^{-1}-\al\\
&=\al^{3}vu-\al.
\end{align*}
(Dividing the both side of this relation $uv-\al^{3}vu+\al=0$ by $-\al(\neq 0)$, 
$-\dfrac{1}{\al}uv+\al^{2}vu-1=0$. 
Next, putting $w:=-\al v$ in this equation, 
we obtain the equation $uw-\al wu-1=0$. )
Since $A'[x^{-1}]_{0}\cong k\langle u,w\rangle/(uw-\al wu-1)$, 
the following equation holds; 
$$|\Specn\, A'[x^{-1}]_{0}|=|\Specn\, K\langle u,w\rangle/(uw-\al wu-1)|=|\mathbb{A}^{2}|.$$
Indeed, 
by \cite[Proposition 2.6]{HW} (\cite[Theorem 5.6]{DGO}), 
any irreducible matrix solution $(A,B)$ of the equation $wu-\al uw=1$, 
in which $BA\neq AB$, is equivalent to either the solution 
$(U_{\la},\,W_{\la,\eta})$ or $(U,\,W_{\beta})$, 
where $U_{\la}$, $\,W_{\la,\eta}$, $U$ and $W_{\beta}$ 
are matrixes $l\times l$ 
for $\la\in k\backslash\{0\}/\langle \al \rangle$, $\eta\in k\backslash\{0\}$, 
$\beta\in k$ as follows;
$$
U_{\la}=\la
\begin{pmatrix}
\al & 0 & \cdots & 0\\
0 & \al^{2} & \cdots & 0\\
\vdots & \vdots & \ddots & \vdots\\
0 & 0 & \cdots & \al^{l}
\end{pmatrix},\,
W_{\la,\eta}=
\begin{pmatrix}
\frac{1}{(1-\al)\al\la} & 1 & 0 &\cdots & 0 & 0\\
0 & \frac{1}{(1-\al)\al^{2}\la}& 1 & \cdots & 0 & 0 \\
\vdots & \vdots & \vdots & \ddots & \vdots & \vdots \\
0 & 0 & 0 & \cdots & \frac{1}{(1-\al)\al^{l-2}\la} & 1 \\
\eta & 0 & 0 & \cdots & 0 &\frac{1}{(1-\al)\al^{l}\la}
\end{pmatrix},
$$
$$
U=
\begin{pmatrix}
0 & 1 & 0 & \cdots & 0 \\
  & 0 & 1 & \cdots & 0 \\
  &   &   & \ddots & 0 \\
  &   &   &  0     & 1 \\
  &   &   &        & 0  
\end{pmatrix},\,
W_{\beta}=
\begin{pmatrix}
0 & 0 & \cdots & 0 & 0 & \beta \\
\sum_{i=0}^{l-2} \al^{i}& 0 & \cdots & 0 & 0 & 0 \\
0 & \sum_{i=0}^{l-3} \al^{i} & \cdots & 0 & 0 & 0 \\
\vdots & \vdots & \ddots & \vdots & \vdots & \vdots \\
0 & 0 & \cdots & 1+\al & 0 & 0 \\
0 & 0 & \cdots & 0 & 1 & 0
\end{pmatrix}.
$$
So, the set of irreducible solution of the equation $wu-\al uw=1$ 
is corresponding to 
$$
X:=\{(U_{\la},\,W_{\la,\eta}) \mid \la\in k\backslash\{0\}/\langle \al \rangle,\,\eta\in k\backslash\{0\}\}\cup
\{(U,\,W_{\beta})\mid \beta\in k\}.
$$
Here, we consider the following map 
$$
\mathbb{A}^{2}=\{(\la,\eta)\mid \la,\,\eta\in k\}
\,\longrightarrow X;\,
(\la,\eta)\,\longmapsto\,
\begin{cases}
(U_{\la},\,W_{\la,\eta})\quad\text{if $\la\in k\backslash\{0\}/\langle \al \rangle$ and $\eta\in k\backslash\{0\}$}, \\
(U,\,W_{\la})\quad\text{if $\eta=0$}.
\end{cases}
$$
Then, the above map is a bijection,  
so $|\Specn\, k\langle u,w\rangle/(uw-\al wu-1)|=|\mathbb{A}^{2}|$ holds. 
Therefore, 
$$
|\Projn A|=|\Projn A'|=|\PP^{1}|\,\bigsqcup\,|\mathbb{A}^{2}|=|\PP^{2}|.
$$
\end{proof}

By Theorem \ref{thm1} and Theorem \ref{thm2}, 
the conjecture in Remark \ref{rem-IMo1} holds for Type S':  
\begin{thm}
\label{Main-I}
For a $3$-dimensional quantum polynomial algebra $A$ of Type S', 
the following are equivalent: 
\begin{enumerate}[{\rm (1)}]
\item $\Projn A$ is finite over its center. 
\item The Beilinson algebra $\nabla A$ of $A$ is $2$-representation tame. 
\item The isomorphism classes of simple $2$-regular modules over $\nabla A$ are parameterized by $\PP^2$.
\end{enumerate}
\end{thm}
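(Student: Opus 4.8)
The plan is to obtain all three equivalences as formal consequences of Theorem \ref{thm1} and Theorem \ref{thm2}, both of which have already been established for Type S' algebras. Since those two results carry the substantive content---the computation of the center in Proposition \ref{prop-S'}, and the decomposition of $|\Projn A|$ via Lemma \ref{Mo2-thm4.20}---the proof of Theorem \ref{Main-I} itself should be a short bookkeeping argument assembling them into a single cycle of implications.

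First I would record that (1) $\Longleftrightarrow$ (2) is immediate: these are precisely conditions (2) and (3) of Theorem \ref{thm1}, which are shown there to be equivalent (both being equivalent to $\|\si\|<\infty$). Next, for (2) $\Longrightarrow$ (3), I would invoke Theorem \ref{thm2}(2) directly: if $\nabla A$ is $2$-representation tame, then the isomorphism classes of simple $2$-regular $\nabla A$-modules are parametrized by the set of points of $\PP^2$, which is exactly condition (3).

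The one implication requiring a genuine argument is (3) $\Longrightarrow$ (2), for which I would argue by contraposition. Suppose $\nabla A$ is not $2$-representation tame. Then Theorem \ref{thm2}(1) says the simple $2$-regular modules are parametrized by the set of points of $E\subsetneq\PP^2$. For Type S', $E$ is a union of a line and a conic meeting at two points, hence a proper closed subscheme of $\PP^2$, so $|E|\neq|\PP^2|$ and condition (3) fails. This yields (3) $\Longrightarrow$ (2) and closes the cycle.

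The main---and essentially only---obstacle here is conceptual rather than computational: one must be certain that the two parametrizing sets appearing in the two cases of Theorem \ref{thm2} are genuinely distinct, so that condition (3) can actually detect $2$-representation tameness. For Type S' this is transparent because $E$ is strictly contained in $\PP^2$. The real difficulty was already absorbed into the proof of Theorem \ref{thm2}(2), where one had to identify $A'[x^{-1}]_0$ with a quantized Weyl algebra and use the classification of its irreducible modules to recover the missing affine piece $|\mathbb{A}^2|$ that completes $|\PP^1|$ to $|\PP^2|$; granting that, Theorem \ref{Main-I} follows formally.
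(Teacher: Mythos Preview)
Your proposal is correct and takes essentially the same approach as the paper: the paper's own proof of Theorem \ref{Main-I} consists of nothing more than the sentence ``By Theorem \ref{thm1} and Theorem \ref{thm2}, the conjecture in Remark \ref{rem-IMo1} holds for Type S','' and you have simply spelled out how those two theorems combine into the three-way equivalence. Your explicit handling of the contrapositive for (3) $\Rightarrow$ (2), and your remark that $E\subsetneq\PP^2$ is what makes the two parametrizations of Theorem \ref{thm2} distinguishable, are exactly the bookkeeping the paper leaves implicit.
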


\proof[Acknowledgements]
The author was supported by 
Grants-in-Aid for Young Scientific Research 21K13781 Japan Society for the Promotion of Science. 


\end{document}